\newcolumntype {Q}{>{$\displaystyle}l<{$}}
\newcolumntype {A}{>{$}c <{$}}
\def\tr{\mathop{\text{tr}}\kern.2ex}
\def\N{{\mathbb N}}
\def\Z{{\mathbb Z}}
\def\cov{{\rm Cov}}
\def\var{{\rm Var}}
\long\def\comment#1{}
\def\tr{\mathop{\text{Tr}}}
\providecommand{\norm}[1]{\vvvert#1\vvvert}
\newcommand{\bel}{\begin{eqnarray}\label}
\newcommand{\eel}{\end{eqnarray}}
\newcommand{\bes}{\begin{eqnarray*}}
\newcommand{\ees}{\end{eqnarray*}}
\def\reals{{\mathbb{R}}}
\numberwithin{equation}{section}
\numberwithin{theorem}{section}
\numberwithin{corollary}{section}
\numberwithin{asmp}{section}
\numberwithin{definition}{section}  
\begin{document}

\setlength{\abovedisplayskip}{5pt}
\setlength{\belowdisplayskip}{5pt}
\setlength{\abovedisplayshortskip}{5pt}
\setlength{\belowdisplayshortskip}{5pt}

\title{\LARGE A Note on Mixing in High Dimensional Time Series}

\author{Jiaqi Yin \thanks {Department of Biostatistics, University of Washington, Seattle, WA 98195, USA.}}



\date{}

\maketitle

\begin{abstract}
Various mixing conditions have been imposed on high dimensional time series, including the strong mixing ($\alpha$-mixing),   maximal  correlation  coefficient ($\rho$-mixing), absolute regularity ($\beta$-mixing), and $\phi$-mixing. $\alpha$-mixing condition is a routine assumption when studying autoregression models. $\rho$-mixing can lead to $\alpha$-mixing. 
In this paper, we prove a way to verify $\rho$-mixing under a high-dimensional triangular array time series setting by using the Pearson's $\phi^2$, mean square contingency. Vector autoregression model VAR(1) and vector autoregression moving average VARMA(1,1) are proved satisfying $\rho$-mixing condition based on low rank setting. 
\end{abstract}

{\bf Keywords:} strong mixing, $\rho$-mixing; absolute regularity; $\phi$-mixing; triangular array  setting; high dimensional time series. 

\section{Introduction}
Dependent sequence data is normal in time series. 
Several coefficients are studied broadly to measure the dependence of two $\sigma$-fields. In the paper, we mainly talk about $\rho$-mixing. The strong mixing coefficient ($\alpha$-mixing coefficient)  is first introduced by  \cite{Rosenblatt1956Mixing}. The strong mixing condition is highly used when studying autoregression models, \cite{Andrews1991, Liebscher2005}. \cite{Rozano1960rho} defined maximal correlation coefficient ($\rho$-mixing coefficient), and also showed that $\alpha(\mathcal{A},\mathcal{B}) \leq \rho(\mathcal{A},\mathcal{B})\leq 2\pi\rho(\mathcal{A},\mathcal{B})$, where $\mathcal{A}$ and $\mathcal{B}$ are two $\sigma$-fields. Therefore, $\rho$-mixing implies $\alpha$-mixing. Absolute regularity ($\beta$-mixing coefficient) was original introduced by \cite{Volkonskii1959beta}, and several equivalent forms of $\beta$-mixing is showed in \cite{BradleyMixing}. 

The paper will be organized as two sections. In Section 1, we will introduce related definitions and the notations we use.  In Section 2, we show that under certain weak assumptions, stationary Markov Chain generated by vector autoregression model (VAR) or vector autoregression moving average model (VARMA) satisfies $\rho$-mixing.  The whole setting is based on   the triangle array setting.

\begin{definition}\label{def:rho_n}
\cite{BradleyMixing,Beare2010Copulas}. The $\rho$-mixing coefficients $\left\{ \rho(n) :n\in\mathbb{N}\right\} $
that correspond to the sequence of random variables $\boldsymbol{X}:=\left\{ X(t) \right\} _{t\in\mathbb{Z}}$
on $(\Omega,\mathcal{F},\mathbb{P}) $ is defined as
\[
\rho(n) =\rho(\boldsymbol{X},n) :=\sup_{J\in\mathbb{Z}}\sup_{f,g}\left|Corr(f,g) \right|,
\]
where the second supremum is taken over $f\in\mathcal{L}_{\text{real}}^{2}(\mathcal{F}_{-\infty}^{J}) $,
$g\in\mathcal{L}_{\text{real}}^{2}(\mathcal{F}_{J+n}^{\infty}) $,
and $\mathcal{F}_{i}^{j}:=\sigma(X(t) ,i\leq t\leq j) $
represents the $\sigma$-fields generated by $\left\{ X(t) \right\} _{i\leq t\leq j}$.
\end{definition}

Copula $C$ is used to characterize the dependence between random variables. \cite{Beare2010Copulas} defined the maximal correlation $\rho_C$ of the copula $C$, and showed that $\rho_C<1$ can lead to $\rho$-mixing. The followings review the definitions of copula and $\rho_C$.
\begin{definition} \label{def:copula}
\cite{NelsenCopulas}. $C:\left[0,1\right]^{d}\rightarrow\left[0,1\right]$ is a $d$-dimensional
copula if $C$ is a joint cumulative distribution function of a $d$-dimensional
random vector on the unit cube $[0,1]^{d}$ with uniform marginals. 
\end{definition}

\begin{definition} \label{def:rhoC}
\cite{Beare2010Copulas}. The maximal correlation $\rho_{C}$ of the copula $C$ is given by
\[
\sup_{f,g}\left|\int_{\left[0,1\right]^{d_{1}}}\int_{\left[0,1\right]^{d_{2}}}f(\boldsymbol{x}) g(\boldsymbol{y}) C(d\boldsymbol{x},d\boldsymbol{y}) \right|,
\]
where the supremum is taken over all $f\in L_{d_{1}}\left[0,1\right],g\in L_{d_{2}}\left[0,1\right]$
such that $\int f=\int g=0$ and $\int f^{2}=\int g^{2}=1$. 
\end{definition}

\subsection{Triangular Array Setting}
Triangular array setting is well-accepted to model high-dimensional data. \cite{han2019probability} apply this setting to time series models.
$\left\{ \boldsymbol{X}_{p_{T}} (t)\right\} _{t\in\mathbb{Z}}$
is a $p_{T}-$dimensional multivariate time series, where $T\in\mathbb{N}^{+}$,
$p_{T}=p(T) \in\mathbb{N}^{+}$, $\boldsymbol{X}_{p_{T}} (t):=(X_{1} (t),X_{2} (t),\ldots,X_{p_{T}} (t)) $.
For any $T$, we only observe a length $T$ fragment, $\left\{\boldsymbol{X}_{p_T}(t),t\in [T]\right\}$
of the time series $\left\{ \boldsymbol{X}_{p_{T}} (t)\right\} _{t\in\mathbb{Z}}$.
To be more detailed, consider Triangular Array Setting,
\begin{align*}
T & =1\mathbb\quad\text{observe }\boldsymbol{X}_{p_{1}}(1);\\
T & =2\mathbb\quad\text{observe }\boldsymbol{X}_{p_{2}}(1),\boldsymbol{X}_{p_{2}}(2);\\
T & =3\mathbb\quad\text{observe }\boldsymbol{X}_{p_{3}}(1),\boldsymbol{X}_{p_{3}}(2),\boldsymbol{X}_{p_{3}}(3);\\
\vdots & \mathbb\quad\mathbb\quad\mathbb\quad\mathbb\quad\vdots\\
T & =T\mathbb\quad\text{observe }\boldsymbol{X}_{p_{T}} (1),\boldsymbol{X}_{p_{T}} (2),\boldsymbol{X}_{p_{T}} (3),\ldots,\boldsymbol{X}_{p_{T}} (T);\\
\vdots & \mathbb\quad\mathbb\quad\mathbb\quad\mathbb\quad\vdots
\end{align*}
Random vectors in each row can be generated from a same model, e.g.
VAR, and AVRMA models in our example. Random vectors in different rows
are allowed to be from different generating models. 

\subsection{Notation}
Let $\N$,       $\N^+$, $\Z$, and $\reals$ represent the sets of natural numbers, non-zero natural numbers, integers, and real numbers. For each $n \in \mathbb{N}$, we define $[n]=\left\{1,2,\ldots,n\right\}$.
$\boldsymbol{X}_p\in \mathbb{R}^p$ is a $p$-dimensional random vector, $\boldsymbol{X}_p:=\left(X_i,i\in [p]\right)$.
$I_k$ is the $k\times k$ identical matrix.
For a given matrix $Q \in\reals^{n\times n}$, $Q^{\intercal   }$ is the transpose of $Q$. $\sigma_i(Q)$ represents the $i$th largest singular value of $Q$, and it can be defined as $\sigma_i(Q)  = \sqrt{\lambda_i(Q Q ^\intercal   )}$, where $\lambda_i(\cdot)$ is the $i$th largest eigenvalue of $Q Q ^\intercal   $. If $Q$ is a symmetric matrix, there is $\sigma_i(Q) = |\lambda_i(Q)|$. $\lambda_{\max}(\cdot)$ and $\lambda_{\min}(\cdot)$ are the maximum and minimum eigenvalues respectively. Denote $\norm{\cdot}$ as the operator norm, or spectral norm, and it also can be defined as the largest singular value,  $\norm{Q}:=\sigma_1(Q)$. $A\succ0$ means $A$ is positive definite, and $A\succeq0$ means $A$ is positive semi-definite. For random vector $\boldsymbol{X}$, denote its covariance-variance matrix as $\Gamma_X$.
Throughout the paper, let $M,M^{'}>0$ be two generic absolute constants, whose actual values may vary at different locations.

\section{VAR and VARMA Models}

Before providing the main results derived in this section, we first introduce  properties of $\rho$-coefficients applied in high dimensional Markov Chain. 

\begin{lemma}\label{lem:rho_decay}
Suppose $\boldsymbol{X}_{p_{T}} =\left\{ \boldsymbol{X}_{p_{T}} (t)\right\} _{t\in\mathbb{Z}}$,
a $p_{T}$-dimensional multivariate time series, is also a (not
necessary stationary) Markov Chain. If for any $T\in\mathbb{N}^{+}$,
there exits some integer $N$ independent of $T$, such that $\rho(\boldsymbol{X}_{p_{T}} ,N) <\kappa<1$,
where $\kappa>0$ is an absolute constant independent of $T$, then
for any $T\in\mathbb{N}^{+}$, there exists
\[
\rho(\boldsymbol{X}_{p_{T}} ,n) \le Ae^{-\gamma n},
\]
 where $\gamma>0$ and $A<\infty$ are two absolute constants not depending
on $n$ or $T$.
\end{lemma}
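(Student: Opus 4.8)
The strategy is to exploit the Markov property to show that $\rho$-mixing coefficients are \emph{submultiplicative} along the time index, and then combine this with the single-block bound $\rho(\boldsymbol{X}_{p_T},N)<\kappa<1$ to obtain geometric decay. The key structural fact for Markov chains is that for any $m,n\ge 1$,
\[
\rho(\boldsymbol{X}_{p_T},m+n)\le \rho(\boldsymbol{X}_{p_T},m)\,\rho(\boldsymbol{X}_{p_T},n).
\]
This is a classical inequality (see Bradley's survey on mixing conditions): for a Markov chain, the conditional expectation operator from $\mathcal{F}_{-\infty}^{J}$ to $\mathcal{F}_{J+m+n}^{\infty}$ factors through the single $\sigma$-field $\sigma(X(J+m))$ (or any intermediate coordinate), so the maximal correlation between the far past and the far future is controlled by the product of the maximal correlations across the two sub-gaps. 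First I would state and justify this submultiplicativity — the justification is the variational characterization of $\rho$ as the operator norm of the conditional-expectation (correlation) operator on mean-zero, unit-variance functions, together with the tower property and the Markov factorization $\E[g\mid \mathcal{F}_{-\infty}^{J}]=\E[\,\E[g\mid \sigma(X(J+m))]\mid \mathcal{F}_{-\infty}^J]$.

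Next I would note the trivial monotonicity $\rho(\boldsymbol{X}_{p_T},n)$ is nonincreasing in $n$ (a larger gap means the two suprema are taken over smaller, nested classes of functions), and that $\rho(\boldsymbol{X}_{p_T},n)\le 1$ always. Then, given the hypothesis $\rho(\boldsymbol{X}_{p_T},N)\le\kappa<1$: for an arbitrary $n\ge N$, write $n=qN+r$ with $q=\lfloor n/N\rfloor\ge 1$ and $0\le r<N$. Applying submultiplicativity $q$ times and then monotonicity for the leftover piece,
\[
\rho(\boldsymbol{X}_{p_T},n)\le \rho(\boldsymbol{X}_{p_T},N)^{q}\cdot\rho(\boldsymbol{X}_{p_T},r)\le \kappa^{q}\le \kappa^{\,n/N-1}.
\]
For $n<N$ we simply bound $\rho(\boldsymbol{X}_{p_T},n)\le 1$. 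Combining the two ranges, set $\gamma:=-\tfrac{1}{N}\log\kappa>0$ and $A:=\kappa^{-1}\vee \kappa^{-N+1}$ (any constant $\ge\max(1,\kappa^{-1})$ works, e.g. $A=\kappa^{-N}$), both depending only on $N$ and $\kappa$, hence on neither $n$ nor $T$; this yields $\rho(\boldsymbol{X}_{p_T},n)\le A e^{-\gamma n}$ for all $n\in\mathbb{N}$ and all $T$, as claimed.

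The main obstacle — and the only nontrivial point — is the submultiplicativity inequality for $\rho$ under the Markov assumption: one must be careful that the two "halves" of the time gap are separated by a single random variable $X(J+m)$ and invoke the correct factorization of the relevant $L^2$ operators, including checking that passing to $\sigma(X(J+m))$-measurable functions does not shrink the relevant norms in the wrong direction. Everything after that — the division-with-remainder argument and the bookkeeping of the constants $A,\gamma$ — is routine. I would also remark that the conclusion is exactly the quantitative input needed downstream: geometric $\rho$-mixing (uniformly over the triangular array) implies geometric $\alpha$-mixing via $\alpha\le\rho$, which is the form typically required for limit theorems.
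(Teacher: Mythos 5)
Your proof is correct and follows essentially the same route as the paper: both reduce the problem to the Markov-chain submultiplicativity of the maximal correlation coefficient across an intermediate time point (the paper's Proposition \ref{A:prop:markov rho}) and iterate it against the hypothesis $\rho(\boldsymbol{X}_{p_{T}},N)<\kappa$ to obtain $\rho(\boldsymbol{X}_{p_{T}},mN)\le\kappa^{m}$ uniformly in $T$. The only cosmetic difference is that you fill in the non-multiples of $N$ explicitly via division with remainder, monotonicity, and explicit constants $\gamma=-N^{-1}\log\kappa$, $A=\kappa^{-1}$, whereas the paper delegates that last step to a cited proposition of Bradley on nonincreasing sequences (Proposition \ref{A:prop_expodecay}).
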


\begin{proof}
 For each positive integer $T$,  integer $j$, and  positive integer $m$, by the definition of $\rho$-mixing coefficient and the property of Markov Chain \ref{A:prop:markov rho}, we have 
 \[
\rho\left[\sigma\{\boldsymbol{X}_{p_{T}} (t),t\leq j\} ,\sigma\{\boldsymbol{X}_{p_{T}} (t),t\geq j+(m+1) N\} \right] =\rho[\sigma\{\boldsymbol{X}_{p_{T}} (j)\} ,\sigma\{\boldsymbol{X}_{p_{T}} (j+(m+1) N)\}].
\]
Using the properties of $\rho$-mixing coefficient in Markov Chain again \ref{A:prop:markov rho}, we further have the inequality, 
\begin{align*}
\rho[\sigma\{\boldsymbol{X}_{p_{T}} (j)\} ,\sigma\{\boldsymbol{X}_{p_{T}} (j+(m+1) N)\}] & \leq\rho[\sigma\{\boldsymbol{X}_{p_{T}} (j)\} ,\sigma\{\boldsymbol{X}_{p_{T}} (j+mN)\}] \\
 & \mathbb\quad\cdot\rho[\sigma\{\boldsymbol{X}_{p_{T}} (j+mN)) ,\sigma(\boldsymbol{X}_{p_{T}} (j+(m+1) N)\}].
\end{align*}
By Definition \ref{def:rho_n} , we deduce 
\begin{align*}
    \rho[\sigma\{\boldsymbol{X}_{p_{T}} (j)\} ,\sigma\{\boldsymbol{X}_{p_{T}} (j+mN)\}] \cdot\rho[\sigma\{\boldsymbol{X}_{p_{T}} (j+mN)) ,\sigma(\boldsymbol{X}_{p_{T}} (j+(m+1) N)\}]
    & \leq\rho(\boldsymbol{X}_{p_{T}} ,mN) \\
    & \mathbb\quad \cdot \rho(\boldsymbol{X}_{p_{T}} ,N) .
\end{align*}

Since $j$ is random in above inequality, we take the supremum of the right hand side,  with the condition $\rho(\boldsymbol{X}_{p_{T}} ,N) <\kappa$, we will have
\[
\rho(\boldsymbol{X}_{p_{T}} ,mN) \cdot\rho(\boldsymbol{X}_{p_{T}} ,N) \leq\rho(\boldsymbol{X}_{p_{T}} ,mN) \kappa.
\]

Hence by induction, for any $T\in\mathbb{N}^{+}$, $\rho(\boldsymbol{X}_{p_{T}} ,mN) <\kappa^{m}$.
Our proof is completed by \ref{A:prop_expodecay}.
\end{proof}

Verifying whether some models satisfy $\rho$-mixing can be a challenge, since it is hard to find $f$ and $g$ from  two infinite classes. However,  \cite{Remillard2012CopulaTimeSeries} extended the work of \cite{Lancaster1958PhiSq} and showed that for random vector, bounded square contingency ($\phi_{C}^2$) would yield $\rho_C<1$. 

\begin{definition}\label{def:phiSq}
 Suppose random vector $(\boldsymbol{X},\boldsymbol{Y}) =(X_{1},\ldots,X_{d_{1}},Y_{1},\ldots,Y_{d_{2}}) \in\mathbb{R}^{d_{1}+d_{2}}$
has cumulative distribution function $H(\boldsymbol{x},\boldsymbol{y}) $
and density $h(\boldsymbol{x},\boldsymbol{y}) $. The marginal
distributions of $\boldsymbol{X}=(X_{1},\ldots,X_{d_{1}}) \in\mathbb{R}^{d_{1}}$
and $\boldsymbol{Y}=(Y_{1},\ldots,Y_{d_{2}}) \in\mathbb{R}^{d_{2}}$
are $P(\boldsymbol{x}) $ and $Q(\boldsymbol{y}) $
with density $p(\boldsymbol{x}) $ and $q(\boldsymbol{y}) $
respectively. Then the Pearson's $\phi^{2}$ is 
\[
\phi^{2}=-1+\int\int\frac{h^{2}(\boldsymbol{x},\boldsymbol{y}) }{p(\boldsymbol{x}) q(\boldsymbol{y}) }d\boldsymbol{x}d\boldsymbol{y}.
\]
\end{definition}
Suppose $\boldsymbol{X}=(X_{1},\ldots,X_{d_{1}}) $ has continuous
marginal distribution $F_{1}(X_1),\ldots,F_{d_{1}}(X_{d_1})$, and $\boldsymbol{Y}=(Y_{1},\ldots,Y_{d_{2}}) $
has continuous marginal distribution $G_{1}(Y_1),\ldots,G_{d_{2}}(Y_{d_2})$. According
to Sklar's Theorem \cite{SklarTheom}, we know there exists a unique
$(d_{1}+d_{2}) $-dimensional random copula $C$ such that
for all $\boldsymbol{x}=(x_{1},\ldots,x_{d_{1}})$, $\boldsymbol{y}=(y_{1},\ldots,y_{d_{2}})$, 
\begin{equation}
H(\boldsymbol{x},\boldsymbol{y}) =C(F_{1}(x_{1}) ,\ldots,F_{d_{1}}(x_{d_{1}}) ,G_{1}(y_{1}) ,\ldots,G_{d_{2}}(y_{d_{2}})). \label{eq:H(x,y)}
\end{equation}
 Assuming that the copula $C$ is absolutely continuous with density
$c$, and the densities $f_{i}$ of $F_{i}$ and $g_{j}$ of $G_{j}$
exist for all $i\in\left\{ 1,\ldots,d_{1}\right\} $ and $j\in\left\{ 1,\ldots,d_{2}\right\} $,
then the joint density of $H$ is 
\[
h(\boldsymbol{x},\boldsymbol{y}) =c(F_{1}(x_{1}) ,\ldots,F_{d_{1}}(x_{d_{1}}) ,G_{1}(y_{1}) ,\ldots,G_{d_{2}}(y_{d_{2}}) ) \prod_{i=1}^{d_{1}}f_{i}(x_{i}) \prod_{j=1}^{d_{2}}g_{j}(y_{j}) .\]
Define $\boldsymbol{U}=(F_{1}(X_{1}) ,\ldots,F_{d_{1}}(X_{d_{1}}))$, and
$\boldsymbol{V}=(G_{1}(Y_{1}) ,\ldots,G_{d_{2}}(Y_{d_{2}}) ) $.
Then the marginal distribution function of $\boldsymbol{X}$ and $\boldsymbol{Y}$
can be derived from (\ref{eq:H(x,y)}) as $P(\boldsymbol{x}) =C(\boldsymbol{u},\boldsymbol{1}) $
, $Q(\boldsymbol{y}) =C(\boldsymbol{1},\boldsymbol{v}) $
with density $p(\boldsymbol{x}) = c(\boldsymbol{u},1) \prod_{i=1}^{d_{1}}f_{i}(x_{i}) $,
$q(\boldsymbol{y}) =c(1,\boldsymbol{v}) \prod_{j=1}^{d_{2}}g_{j}(y_{j}) $
respectively. Then define $\phi_{C}^{2}=-1+\int\int\frac{c(\boldsymbol{u},\boldsymbol{v}) }{c(\boldsymbol{u},1) c(1,\boldsymbol{v})}d\boldsymbol{u}d\boldsymbol{v}$,
and obviouly,
\begin{equation}\label{phic=phi}
\phi_{C}^{2}=-1+\int\int\frac{h^{2}(\boldsymbol{x},\boldsymbol{y}) }{p(\boldsymbol{x}) q(\boldsymbol{y}) }d\boldsymbol{x}d\boldsymbol{y}=\phi^{2}.
\end{equation}

With different $T$, different dimensional multivariate time series
will be observed. We then add $p_T$ as the index of the
following functions to tell the difference. 

Consider stationary Markov Chain $\left\{ \boldsymbol{X}_{p_{T}}  (t )\right\} _{t\in\mathbb{Z}}$.
Let  $F_{p_T}(\cdot)$ be the cumulative distribution function of $\boldsymbol{X}_{p_{T}}  (t )$,
and  $H_{2p_T}(\cdot)$ be the joint cumulative distribution function of $ (\boldsymbol{X}_{p_{T}}  (t-1 ),\boldsymbol{X}_{p_{T}}  (t ) )$.
Let $C_{2p_T} $ be the copula associated with the $2p_{T}-$dimensional
random vector $ (\boldsymbol{X}_{p_{T}}  (t-1 ),\boldsymbol{X}_{p_{T}}  (t ) )$.
The copula  $Q_{p_T}$ of $\boldsymbol{X}_{p_{T}}  (t-1 )$
is the same as the copula of $\boldsymbol{X}_{p_{T}}  (t)$,  $Q_{p_T} (\boldsymbol{u} )=C_{2p_T}  (\boldsymbol{u},\mathbf{1} )=C_{2p_T}  (\boldsymbol{1},\boldsymbol{u} )$, any $\boldsymbol{u}\in\left[0,1\right]^{p_{T}}$.
Define $\boldsymbol{U}_{p_{T}}  (t )=F  \{\boldsymbol{X}_{p_{T}}  (t ) \}$.
Then $\left\{ \boldsymbol{U}_{p_{T}}  (t )\right\} _{t\in\mathbb{Z}}$
is a $p_{T}-$dimensional time series such that the cumulative distribution
function of $ (\boldsymbol{U}_{p_{T}}  (t-1 ),\boldsymbol{U}_{p_{T}}  (t ) )$
is $C_{2p_T}$, denoting as $ (\boldsymbol{U}_{p_{T}}  (t-1 ),\boldsymbol{U}_{p_{T}}  (t ) )\sim C_{2p_T} $,
and the marginal distribution $\boldsymbol{U}_{p_T}  (t )\sim Q_{p_T} $.
If we consider the $\rho$-coefficient of $\sigma \{\boldsymbol{U}_{p_{T}}  (t-1 ) \}$
and $\sigma \{\boldsymbol{U}_{p_{T}}  (t ) \}$,
then we can set $\rho_{C}$ as $\rho_{C}=\rho (\boldsymbol{U}_{p_{T}} ,1 )$ \citep {Remillard2012CopulaTimeSeries},
specifically, $\rho_{C_{2p_T}}$.

\subsection{Main theorem}
\begin{theorem}\label{thm:rho-mixing}
Suppose $\left\{ \boldsymbol{X}_{p_{T}} (t)\right\} _{t\in\mathbb{Z}}$
is a $p_{T}-$dimensional multivariate stationary Markov Chain. Assume
  $F_{p_T}$ and  $H_{2p_T}$ are the marginal distribution of $\boldsymbol{X}_{p_{T}} (t)$ and $(\boldsymbol{X}_{p_{T}} (t-1), \boldsymbol{X}_{p_{T}} (t)) $ respectively, and their densities $f_{p_T} >0$, $h_{2p_T} >0$ are almost surely on $\mathbb{R}^{p_{T}}$ and $\mathbb{R}^{2p_{T}}$. If there exists an absolute constant $\gamma>0$ not
depending on $p_{T}$, such that
\[
\lim_{p_{T}\rightarrow\infty}\phi^{2}(p_{T}) =\lim_{p_{T}\rightarrow\infty}-1+\int_{\mathbb{R}^{p_{T}}}\int_{\mathbb{R}^{p_{T}}}\frac{\left\{ h_{2p_T} (\boldsymbol{x},\boldsymbol{y}) \right\} ^{2}}{f_{p_T}(\boldsymbol{x}) f_{p_T} (\boldsymbol{y}) }d\boldsymbol{x}d\boldsymbol{y}<\gamma,
\]
then for any $T\in\mathbb{N}^{+}$, any $n\in\mathbb{N}^{+}$,
there is 
\[
\rho(\boldsymbol{X}_{p_{T}} ,n) \leq M\delta^{n},
\]
where $M,\delta$ do not depend on $T$ or $n$. 
\end{theorem}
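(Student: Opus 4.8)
The plan is to squeeze the whole problem down to a single-lag estimate and then quote Lemma~\ref{lem:rho_decay}. First I would use that $\{\boldsymbol{X}_{p_T}(t)\}_{t\in\mathbb{Z}}$ is a stationary Markov chain: by the Markov-chain properties of the maximal correlation coefficient (the ones invoked as \ref{A:prop:markov rho} in the proof of Lemma~\ref{lem:rho_decay}), the lag-$1$ coefficient of Definition~\ref{def:rho_n} collapses to a consecutive pair,
\[
\rho(\boldsymbol{X}_{p_T},1)=\rho\big[\sigma\{\boldsymbol{X}_{p_T}(t-1)\},\sigma\{\boldsymbol{X}_{p_T}(t)\}\big],
\]
and stationarity makes the right side independent of $t$. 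Since $F_{p_T}$ has a strictly positive density, the coordinatewise probability integral transform $\boldsymbol{X}_{p_T}(t)\mapsto\boldsymbol{U}_{p_T}(t)=F_{p_T}\{\boldsymbol{X}_{p_T}(t)\}$ is essentially invertible and leaves this coefficient unchanged, so by the discussion preceding the theorem it equals $\rho_{C_{2p_T}}$, the maximal correlation of the copula $C_{2p_T}$. It therefore suffices to exhibit one absolute constant $\rho_0<1$ with $\rho_{C_{2p_T}}\le\rho_0$ for every $T$: then Lemma~\ref{lem:rho_decay}, applied with $N=1$ and $\kappa=\rho_0$, gives $\rho(\boldsymbol{X}_{p_T},n)\le Ae^{-\gamma' n}=M\delta^{n}$ with $M=A<\infty$, $\delta=e^{-\gamma'}\in(0,1)$, and neither $M$ nor $\delta$ depending on $T$ or $n$.

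The quantitative heart is the bound relating $\phi^2(p_T)$ to $\rho_{C_{2p_T}}$. By (\ref{phic=phi}) we have $\phi^2(p_T)=\phi^2_{C_{2p_T}}$, and by Lancaster's canonical decomposition of the mean square contingency, extended to random vectors in \cite{Lancaster1958PhiSq,Remillard2012CopulaTimeSeries}, $\phi^2_{C_{2p_T}}$ is the sum of the squared canonical correlations of $C_{2p_T}$, the largest of which is $\rho_{C_{2p_T}}^{2}$; hence $\rho_{C_{2p_T}}^{2}\le\phi^2(p_T)$. By hypothesis $\phi^2(p_T)\to\lim_{p_T\to\infty}\phi^2(p_T)<\gamma$, so there is a threshold $P_0$ and a constant $r<1$ with $\rho_{C_{2p_T}}\le r$ for all $p_T\ge P_0$ --- this step is where the limiting value of $\phi^2$ (equivalently $\gamma$) must not exceed $1$, since $\rho_C^2\le\phi^2_C$ only bites in that range.

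For the finitely many remaining dimensions $p_T<P_0$ (and for all $T$ at once, should $\{p_T\}_T$ be bounded) I would argue pointwise in $T$: the assumption $h_{2p_T}>0$ a.s.\ makes the copula density $c_{2p_T}$ positive a.s., and together with $\phi^2_{C_{2p_T}}<\infty$ this gives $\rho_{C_{2p_T}}<1$ by the result of \cite{Beare2010Copulas,Remillard2012CopulaTimeSeries} --- the conditional-expectation operator attached to $C_{2p_T}$ is Hilbert--Schmidt, hence compact, so $\rho_{C_{2p_T}}=1$ would be attained and would force two consecutive observations to coincide through a non-constant common transform, which a strictly positive joint density forbids. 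Taking $\rho_0:=\max\{\,r,\ \max_{p_T<P_0}\rho_{C_{2p_T}}\,\}<1$ and returning to the reduction of the first paragraph finishes the proof.

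The main obstacle is the uniformity over $T$. Finiteness of $\phi^2(p_T)$ by itself only delivers $\rho_{C_{2p_T}}<1$ separately for each $T$, which is useless for an $M,\delta$ that must be independent of $T$; it is exactly the convergence assumption that pins a single $r<1$ valid for all large $p_T$, while the density-positivity assumption is what clears the finitely many small dimensions. A subsidiary point one must not skip is the identification $\rho(\boldsymbol{X}_{p_T},1)=\rho_{C_{2p_T}}$, which rests on the Markov reduction of the lag-$1$ coefficient and on the invariance of maximal correlation under the (here continuous) marginal transforms.
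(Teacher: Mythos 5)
Your overall architecture is the same as the paper's: reduce the theorem to a uniform-in-$T$ bound $\rho_{C_{2p_T}}\le\kappa<1$ on the lag-one copula maximal correlation (via the Markov reduction and the relation between $\rho(\boldsymbol{X}_{p_T},1)$ and $\rho_{C_{2p_T}}$), then invoke Lemma \ref{lem:rho_decay} with $N=1$. Where you differ is the quantitative step: the paper cites Proposition 2 of \cite{Remillard2012CopulaTimeSeries} and simply asserts that boundedness of $\phi_C^2(p_T)$ yields a single $\kappa<1$ valid for all $T$, whereas you make the mechanism explicit through the spectral inequality $\rho_C^2\le\phi_C^2$ (largest squared canonical correlation bounded by their sum) together with a compactness argument for the remaining dimensions. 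Your version is more transparent and correctly identifies uniformity over $T$ as the crux of the whole theorem.

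Precisely because you made the step explicit, however, a genuine gap surfaces. The theorem assumes only $\lim\phi^2(p_T)<\gamma$ for \emph{some} absolute constant $\gamma>0$; nothing forces $\gamma\le1$. When $\gamma>1$ the bound $\rho_{C_{2p_T}}^2\le\phi^2(p_T)<\gamma$ is vacuous, your threshold-$P_0$ step produces no $r<1$, and the Hilbert--Schmidt compactness argument gives $\rho_{C_{2p_T}}<1$ only separately for each $T$, which (as you yourself observe) cannot yield constants independent of $T$. You flag this restriction but do not close it, so as written your proof establishes the theorem only under the extra hypothesis $\limsup_{p_T\to\infty}\phi^2(p_T)<1$. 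Nor is this an artifact of your method that a sharper inequality would remove: taking the bivariate copula density $c(u,v)=1+\lambda f(u)f(v)$ with $f=+1$ on $[0,1/2]$, $f=-1$ on $(1/2,1]$, and $\lambda=1-1/p_T$ gives $\phi_C^2=\lambda^2<1<\gamma$ uniformly bounded while $\rho_C=\lambda\to1$, so bounded $\phi^2$ alone cannot force a uniform $\kappa<1$. (This is also exactly the thinnest point of the paper's own proof, which obtains the uniform $\kappa$ by assertion from the cited proposition.) A secondary issue: your final $\max_{p_T<P_0}\rho_{C_{2p_T}}$ may range over infinitely many $T$ if several rows of the triangular array share a dimension but have different generating models, and a supremum of quantities each strictly below $1$ need not stay below $1$; so even the small-dimension case ultimately needs a quantitative bound rather than pointwise compactness.
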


\begin{remark}
For fixed $T$, Theorem $\ref{thm:rho-mixing}$ will lead to $\rho_{C_{2p_T}}<1$ by (\ref{phic=phi}) and Proposition 2 in \cite{Remillard2012CopulaTimeSeries}. Then $\rho(\boldsymbol{X}_{p_{T}} ,n) \leq M\delta^n$ will be yielded according to the Theorem 4.1 in \cite{Beare2010Copulas}.
\end{remark}
\begin{proof}[Proof of Theorem \ref{thm:rho-mixing}]
Suppose $C_{2p_T}$ is the copula associated with $2p_{T}$-dimensional
random vector $\left(\boldsymbol{X}_{p_{T}}(t-1),\boldsymbol{X}_{p_{T}}(t)\right)$,
and $Q_{p_T}$ is the copula of $\boldsymbol{X}_{p_{T}}\left(t\right)$.
Note that $\left\{ \boldsymbol{X}_{p_{T}}\left(t\right)\right\} _{t\in\mathbb{Z}}$
is a stationary Markov chain, then $C_{2p_T}$ and $Q_{p_T}$ will remain
the same for any $t\in\mathbb{Z}$. If we can prove for any $T$, $\rho\left(\boldsymbol{X}_{p_{T}},1\right)<\kappa<1$,
 then by Lemma \ref{lem:rho_decay} we will know that $\rho\left(\boldsymbol{X}_{p_{T}},n\right)\leq M\delta^{n}$,
where absolute constants $\kappa, M,\delta$ do not depend on $T$ or $n$.  From  (\ref{phic=phi}), we know $\phi_{C}^{2}\left(p_{T}\right)=\phi^{2}\left(p_{T}\right)$, and as showed in \cite{Beare2010Copulas},
$\rho\left(\boldsymbol{X}_{p_{T}},1\right)\leq \rho_{C_{2p_T}}$. Hence, Theorem \ref{thm:rho-mixing} can be proved if we can show that 
 $\underset{p_{T}\rightarrow\infty}{\lim}\phi_{C}^{2}\left(p_{T}\right)<\gamma$ yields $\rho_{C_{2p_T}}<\kappa<1$, where $\gamma$ does not depend on $p_{T}$.
 
The following proof is an extension of Proposition 2 in \cite{Remillard2012CopulaTimeSeries}.

Because$\underset{p_{T}\rightarrow\infty}{\lim}\phi^{2}\left(p_{T}\right)<\gamma$,
we will have $\underset{p_{T}\rightarrow\infty}{\lim}\phi_{C}^{2}\left(p_{T}\right)<\gamma$,
where $\gamma$ doesn't depend on $p_{T}$. It implies that $\phi_{C}^{2}\left(p_{T}\right)$
is bounded by some absolute constant for any $T\in\mathbb{N}^{+}$.
According to the Proposition 2 in \cite{Remillard2012CopulaTimeSeries}, there exists some absolute constant $\kappa>0$ not dependent on $T$, such that
$\rho_{C_{2p_T}}<\kappa<1$. Then our theorem is  proved.
\end{proof}

\subsection{High Dimensional Time Series Model}
In the following high dimensional time series, we consider the triangular array setting. Stationary Markov Chain $\left\{ \boldsymbol{X}_{p_{T}} (t)\right\} _{t\in\mathbb{Z}}$
is generated by VAR(1) and VARMA(1,1) respectively. However, For each $T\in\mathbb{N}^+$, we only observe a length $T$ fragment, $\left\{\boldsymbol{X}_{p_T}(t),t\in [T]\right\}$
of the time series $\left\{ \boldsymbol{X}_{p_{T}} (t)\right\} _{t\in\mathbb{Z}}$. And as $T$ goes to infinity, the dimension of $ \boldsymbol{X}_{p_{T}} (t)$ goes infinity too.


By showing that  $\underset{p_T\rightarrow\infty}{\lim}\phi^{2}(p_T)<M<\infty$, where $M$ does not depend on $T$, we will prove for any $T$, the Markov Chain $\left\{ \boldsymbol{X}_{p_{T}} (t)\right\} _{t\in\mathbb{Z}}$ generated by VAR(1) or VARMA(1,1) satisfies $\rho$-mixing.

\subsubsection{VAR(1)}
\begin{theorem}
Suppose $\left\{ \boldsymbol{X}_{p_{T}} (t)\right\} _{t\in\mathbb{Z}}$
be a $p_{T}-$dimensional multivariate stationary Markov Chain generated
by VAR(1) model:
\begin{equation}\label{model:VAR}
\boldsymbol{X}_{p_{T}}(t)=A_{p_T}\boldsymbol{X}_{p_{T}}(t-1)+\xi_{p_T}(t),
\end{equation}
where, $\xi_{p_T}(t)\sim N_{p_{T}}(0,\Sigma_{\xi_{p_T}}),\cov\{\xi_{p_T}(t),\xi_{p_T}(s)\}=0\text{ if }t\neq s$.
There is 
\[
\underset{p_T\rightarrow\infty}{\lim}\phi^{2}(p_T)<M<\infty,
\]
if we assume
\begin{itemize}
    \item {\bf Assumption (A1)} $p_{T}\times p_{T}$ matrix $A_{p_T}$ with fixed rank $k$;
    \item {\bf Assumption (A2)}  $\lambda_{\min}(\Sigma_{\xi_{p_T}})>\delta> 0$, $\delta $ is an absolute constant not depend on $p_T$;
    \item and {\bf Assumption (A3)} $\lambda_{\max}(\Gamma_{X_{p_T}})<\zeta<\infty$, $\zeta$ is an absolute constant not depend on $p_T$.
\end{itemize}
\end{theorem}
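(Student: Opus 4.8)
The plan is to exploit Gaussianity to obtain a closed form for $\phi^{2}(p_{T})$ in terms of the canonical correlations between $\boldsymbol{X}_{p_{T}}(t-1)$ and $\boldsymbol{X}_{p_{T}}(t)$, and then to bound those canonical correlations uniformly in $p_{T}$ from Assumptions (A1)--(A3); to lighten notation I suppress the index and write $\Gamma,A,\Sigma_{\xi}$ for $\Gamma_{X_{p_{T}}},A_{p_{T}},\Sigma_{\xi_{p_{T}}}$. First I would pin down the covariance structure: because the chain is a stationary Gaussian VAR(1) and $\xi(t)$ is independent of $\boldsymbol{X}(t-1)$, the pair $\left(\boldsymbol{X}(t-1),\boldsymbol{X}(t)\right)$ is mean-zero Gaussian with both marginal blocks equal to $\Gamma$ and cross-block $\Gamma A^{\intercal}$; call the joint covariance $\Sigma$. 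Stationarity gives the Yule--Walker identity $\Gamma=A\Gamma A^{\intercal}+\Sigma_{\xi}$, and (A2) forces $\Gamma\succeq\Sigma_{\xi}\succ 0$, hence $\Gamma\succ 0$ and $\Sigma\succ 0$ (its block $LDL^{\intercal}$ factorization has diagonal blocks $\Gamma$ and $\Sigma_{\xi}$). In particular $f_{p_{T}}>0$ and $h_{2p_{T}}>0$ as the theorem requires, and all the inverses and square roots below are well defined.

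Next I would evaluate $\phi^{2}(p_{T})$ for this Gaussian. Substituting the densities into Definition \ref{def:phiSq} turns the quantity into a Gaussian integral $\int\exp\{-\boldsymbol{z}^{\intercal}K\boldsymbol{z}\}\,d\boldsymbol{z}$ with $K=\Sigma^{-1}-\tfrac12\,\diag(\Gamma,\Gamma)^{-1}$, which converges exactly when $K\succ 0$; carrying it out and simplifying the determinants by Schur complements yields
\[
1+\phi^{2}(p_{T})=\prod_{i}\frac{1}{1-\rho_{i}^{2}},
\]
where $\rho_{1}\ge\rho_{2}\ge\cdots$ are the canonical correlations of $\left(\boldsymbol{X}(t-1),\boldsymbol{X}(t)\right)$, i.e.\ the square roots of the eigenvalues of $A^{\intercal}\Gamma^{-1}A\Gamma$. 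By (A1) the cross-block $\Gamma A^{\intercal}$ has rank $k$, so at most $k$ of the $\rho_{i}$ are nonzero and the product has at most $k$ nontrivial factors. (For fixed $T$ this identity is Proposition 2 of \cite{Remillard2012CopulaTimeSeries} specialized to the Gaussian copula; the issue here is uniformity in $p_{T}$.)

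The crux is a uniform bound on $\rho_{1}$. Using $\rho_{1}^{2}=\|\Gamma^{-1/2}A\Gamma A^{\intercal}\Gamma^{-1/2}\|$ together with Yule--Walker, $\rho_{1}^{2}=\|I-\Gamma^{-1/2}\Sigma_{\xi}\Gamma^{-1/2}\|$. Since $\Sigma_{\xi}\preceq\Gamma$ this matrix is positive semidefinite, and since $\Gamma^{-1/2}\Sigma_{\xi}\Gamma^{-1/2}\succeq\delta\,\Gamma^{-1}\succeq(\delta/\zeta)\,I$ by (A2)--(A3), its eigenvalues lie in $[0,1-\delta/\zeta]$. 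Hence $\rho_{1}^{2}\le 1-\delta/\zeta<1$, with a bound depending only on the absolute constants $\delta,\zeta$; this in particular gives $K\succ 0$, so the integral above converges and the product formula is valid. Therefore $\phi^{2}(p_{T})\le(\zeta/\delta)^{k}-1$ for every $T$, and since $k$ is fixed this yields $\limsup_{p_{T}\to\infty}\phi^{2}(p_{T})\le M<\infty$ with $M:=(\zeta/\delta)^{k}-1$.

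I expect the main obstacle to be the bookkeeping behind the determinant identity: running the Gaussian integral through in dimension $2p_{T}$, justifying its convergence (which relies on the not-yet-established bound $\rho_{1}<1$, so the integral step and the eigenvalue step really have to be interleaved), and correctly accounting for the rank-deficient directions so that the ``independent part'' of the joint law contributes a factor $1$ to $\prod_{i}(1-\rho_{i}^{2})^{-1}$ rather than being dropped or double-counted. By contrast the eigenvalue estimate on $\rho_{1}$ is short once the Yule--Walker identity and Assumptions (A2)--(A3) are in hand.
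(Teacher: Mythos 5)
Your proof is correct and follows essentially the same route as the paper: both reduce $\phi^{2}(p_{T})$ via the Gaussian integral to $-1+\prod_{i}\left(1-\rho_{i}^{2}\right)^{-1}$ with $\rho_{i}^{2}$ the eigenvalues of $A^{\intercal}\Gamma^{-1}A\Gamma$, invoke the Yule--Walker identity to identify $I-\Gamma^{-1}A\Gamma A^{\intercal}$ with $\Gamma^{-1}\Sigma_{\xi}$, and use (A1) to keep only $k$ nontrivial factors and (A2)--(A3) to bound each from below by $\delta/\zeta$. Your symmetrized estimate $\Gamma^{-1/2}\Sigma_{\xi}\Gamma^{-1/2}\succeq(\delta/\zeta)I$ is a slightly cleaner version of the paper's operator-norm argument, and your final constant $M=(\zeta/\delta)^{k}-1$ is the correct one (the paper's displayed $\lambda_{\min}>\zeta/\delta$ and $-1+(\delta/\zeta)^{k}$ have the ratio inverted).
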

\begin{proof}
Recall Definition $\ref{def:phiSq}$,
if we can show that based on above assumptions $\phi^{2}\left(p_{T}\right)$
is bounded by some absolute constants not dependent on $T$, then the
theorem is proved. 
Denote $\Gamma_{X_{p_{T}}}=\var\left\{\boldsymbol{X}_{p_{T}}(t)\right\}$,
$\Gamma_{X_{p_{T}}}(1)=\cov\left\{\boldsymbol{X}_{p_{T}}(t+1),\boldsymbol{X}_{p_{T}}(t)\right\}=A_{p_{T}}\Gamma_{X_{p_{T}}}$,
$\boldsymbol{Z}_{2p_{T}}=\left(\boldsymbol{X}_{p_{T}}(t),\boldsymbol{X}_{p_{T}}(t+1)\right)$.
Because $\left\{ \boldsymbol{X}_{p_{T}}(t)\right\} _{t\in\mathbb{Z}}$
is a stationary time series, there exists
\begin{equation}
\Gamma_{X_{p_{T}}}=A_{p_{T}}\Gamma_{X_{p_{T}}}A_{p_{T}}^{\intercal}+\Sigma_{\xi_{p_{T}}},\label{eq:stationaryCondition_VAR}
\end{equation}
and the marginal distributions for $\boldsymbol{X}_{p_{T}}(t)$ and
$\left(\boldsymbol{X}_{p_{T}}(t),\boldsymbol{X}_{p_{T}}(t+1)\right)$
will remain the same respectively for any $t\in\mathbb{Z}$.
Then we will have 
\[
\boldsymbol{X}_{p_{T}}(t)\sim N_{p_{T}}\left(0,\Gamma_{X_{p_{T}}}\right),
\]
\[
\boldsymbol{Z}_{2p_{T}}\sim N_{2p_{T}}\left(0,\Sigma_{2p_{n}}\right),
\]
where
\begin{align}
\Sigma_{2p_{T}} & =\begin{pmatrix}\Gamma_{X_{p_{T}}} & \Gamma_{X_{p_{T}}}A_{p_{T}}^{\intercal}\\
A_{p_{T}}\Gamma_{X_{p_{T}}} & \Gamma_{X_{p_{T}}}
\end{pmatrix}.\label{eq:Sigma_VAR}
\end{align}
The density of $\boldsymbol{Z}_{2p_{T}}$ is 
\begin{align*}
h\left(\boldsymbol{z}_{2p_{T}}\right) & =\frac{1}{\left(2\pi\right)^{\frac{2p_{T}}{2}}\left|\Sigma_{2p_{T}}\right|^{\frac{1}{2}}}\exp\left\{ -\frac{1}{2}\boldsymbol{z}_{2p_{T}}^{\intercal}\Sigma_{2p_{T}}^{-1}\boldsymbol{z}_{2p_{T}}\right\} .
\end{align*}
The product of the density of $\boldsymbol{X}_{p_{T}}(t)$ and $\boldsymbol{X}_{p_{T}}(t+1)$    is 
\begin{align*}
p\left\{\boldsymbol{x}_{p_{T}}\left(t\right)\right\}p\left\{\boldsymbol{x}_{p_{T}}\left(t+1\right)\right\} & =\left[\frac{1}{\left(2\pi\right)^{\frac{p_{T}}{2}}\left|\Gamma_{X_{p_{T}}}\right|^{\frac{1}{2}}}\exp\left\{ -\frac{1}{2}\boldsymbol{x}_{p_{T}}\left(t\right)^{\intercal}\Gamma_{X_{p_{T}}}^{-1}\boldsymbol{x}_{p_{T}}\left(t\right)\right\} \right]^{2}\\
 & =\frac{1}{\left(2\pi\right)^{\frac{2p_{T}}{2}}\left|\widetilde{\Sigma}_{2p_{T}}\right|^{\frac{1}{2}}}\exp\left\{ -\frac{1}{2}\boldsymbol{z}_{2p_{T}}^{\intercal}\widetilde{\Sigma}_{2p_{T}}^{-1}\boldsymbol{z}_{2p_{T}}\right\} ,
\end{align*}
where 
\begin{equation}
\widetilde{\Sigma}_{2p_{T}}=\left(\begin{array}{cc}
\Gamma_{X_{p_{T}}} & \boldsymbol{0}\\
\boldsymbol{0} & \Gamma_{X_{p_{T}}}
\end{array}\right).\label{eq:title_Sigma_VAR}
\end{equation}
Therefore, 
\begin{align}
\frac{h\left(\boldsymbol{x}_{p_{T}}\left(t\right),\boldsymbol{x}_{p_{T}}\left(t+1\right)\right)}{p\left(\boldsymbol{x}_{p_{T}}\left(t\right)\right)p\left(\boldsymbol{x}_{p_{T}}\left(t+1\right)\right)} & =\frac{\left(\frac{1}{\left(2\pi\right)^{\frac{2p_{T}}{2}}\left|\Sigma_{2p_{T}}\right|^{\frac{1}{2}}}\exp\left\{ -\frac{1}{2}\boldsymbol{z}_{2p_{T}}^{\intercal}\Sigma_{2p_{T}}^{-1}\boldsymbol{z}_{2p_{T}}\right\} \right)^{2}}{\frac{1}{\left(2\pi\right)^{\frac{2p_{T}}{2}}\left|\widetilde{\Sigma}_{2p_{T}}\right|^{\frac{1}{2}}}\exp\left\{ -\frac{1}{2}\boldsymbol{z}_{2p_{T}}^{\intercal}\widetilde{\Sigma}_{2p_{T}}^{-1}\boldsymbol{z}_{2p_{T}}\right\} }\nonumber \\
 & =\frac{\left|\widetilde{\Sigma}_{2p_{T}}\right|^{\frac{1}{2}}}{\left(2\pi\right)^{\frac{2p_{T}}{2}}\left|\Sigma_{2p_{T}}\right|}\exp\left\{ -\frac{1}{2}\boldsymbol{z}_{2p_{T}}^{\intercal}\left(2\Sigma_{2p_{T}}^{-1}-\widetilde{\Sigma}_{2p_{T}}^{-1}\right)\boldsymbol{z}_{2p_{T}}^{\intercal}\right\} .\label{eq:kernel}
\end{align}
From Lemma \ref{A:InvCovDiff_def}, we know $2\Sigma_{2p_{T}}^{-1}-\widetilde{\Sigma}_{2p_{T}}^{-1}$
is a positive definite matrix with fixed dimension. Plugging (\ref{eq:kernel})
into $\phi^{2}(p_{T})$ yields 
\begin{align}
\phi^{2}\left(p_{T}\right) & =-1+\int\frac{h\left(\boldsymbol{x}_{p_{T}}\left(t\right),\boldsymbol{x}_{p_{T}}\left(t+1\right)\right)}{p\left(\boldsymbol{x}_{p_{T}}\left(t\right)\right)p\left(\boldsymbol{x}_{p_{T}}\left(t+1\right)\right)}d\boldsymbol{z}_{2p_{T}}\nonumber \\
 & =-1+\frac{\left|\widetilde{\Sigma}_{2p_{T}}\right|^{\frac{1}{2}}\left|\left(2\Sigma_{2p_{T}}^{-1}-\widetilde{\Sigma}_{2p_{T}}^{-1}\right)^{-1}\right|^{\frac{1}{2}}}{\left|\Sigma_{2p_{T}}\right|}.\label{eq:phiSq_VAR}
\end{align}
The following will show that (\ref{eq:phiSq_VAR}) is always bounded
by some absolute constant $M$ independent of $T$. 
Since $\left|\left(2\Sigma_{2p_{T}}^{-1}-\widetilde{\Sigma}_{2p_{T}}^{-1}\right)^{-1}\right|=\frac{1}{\left|\left(2\Sigma_{2p_{T}}^{-1}-\widetilde{\Sigma}_{2p_{T}}^{-1}\right)\right|}$,
$\left|\widetilde{\Sigma}_{2p_{T}}\right|^{\frac{1}{2}}=\frac{1}{\left|\widetilde{\Sigma}_{2p_{T}}^{-1}\right|^{\frac{1}{2}}}$
, then 
\begin{equation}
\frac{\left|\widetilde{\Sigma}_{2p_{T}}\right|^{\frac{1}{2}}\left|\left(2\Sigma_{2p_{T}}^{-1}-\widetilde{\Sigma}_{2p_{T}}^{-1}\right)^{-1}\right|^{\frac{1}{2}}}{\left|\Sigma_{2p_{T}}\right|}=\frac{1}{\left|\left(2\Sigma_{2p_{T}}^{-1}-\widetilde{\Sigma}_{2p_{T}}^{-1}\right)\right|^{\frac{1}{2}}\left|\Sigma_{2p_{T}}\right|\left|\widetilde{\Sigma}_{2p_{T}}^{-1}\right|^{\frac{1}{2}}}.\label{eq:phiSq_VAR_simply1}
\end{equation}
Rearrange the denominator of (\ref{eq:phiSq_VAR_simply1}) by the property
of matrix determinant, we further have 
\begin{align}
\left|\left(2\Sigma_{2p_{T}}^{-1}-\widetilde{\Sigma}_{2p_{T}}^{-1}\right)\right|^{\frac{1}{2}}\left|\Sigma_{2p_{T}}\right|\left|\widetilde{\Sigma}_{2p_{T}}^{-1}\right|^{\frac{1}{2}} & =\left|\left(2I_{2p_{T}}-\widetilde{\Sigma}_{2p_{T}}^{-1}\Sigma_{2p_{T}}\right)\Sigma_{2p_{T}}\widetilde{\Sigma}_{2p_{T}}^{-1}\right|^{\frac{1}{2}}.\label{eq:phiSq_VAR_simply2}
\end{align}
Recalling (\ref{eq:Sigma_VAR}),(\ref{eq:title_Sigma_VAR}), we can compute
the matrix determinant involved in  (\ref{eq:phiSq_VAR_simply2}),
\begin{equation}
\Sigma_{2p_{T}}\widetilde{\Sigma}_{2p_{T}}^{-1}=\begin{pmatrix}\Gamma_{X_{p_{T}}} & \Gamma_{X_{p_{T}}}A_{p_{T}}^{\intercal}\\
A_{p_{T}}\Gamma_{X_{p_{T}}} & \Gamma_{X_{p_{T}}}
\end{pmatrix}\left(\begin{array}{cc}
\Gamma_{X_{p_{T}}} & \boldsymbol{0}\\
\boldsymbol{0} & \Gamma_{X_{p_{T}}}
\end{array}\right)^{-1}=\left(\begin{array}{cc}
I_{p_{T}} & \Gamma_{X_{p_{T}}}A_{p_{T}}^{\intercal}\Gamma_{X_{p_{T}}}^{-1}\\
A_{p_{T}} & I_{p_{T}}
\end{array}\right),
\end{equation}

\begin{equation}
\widetilde{\Sigma}_{2p_{T}}^{-1}\Sigma_{2p_{T}}=\left(\Sigma_{2p_{T}}\widetilde{\Sigma}_{2p_{T}}^{-1}\right)^{\intercal}=\left(\begin{array}{cc}
I_{p_{T}} & A_{p_{T}}^{\intercal}\\
\Gamma_{X_{p_{T}}}^{-1}A_{p_{T}}\Gamma_{X_{p_{T}}} & I_{p_{T}}
\end{array}\right)\label{eq:InvSigmaTitleSigma}
\end{equation}
Noticing $\left|2I_{2p_{T}}-\widetilde{\Sigma}_{2p_{T}}^{-1}\Sigma_{2p_{T}}\right|=\left|\left(\begin{array}{cc}
I_{p_{T}} & -A_{p_{T}}^{\intercal}\\
-\Gamma_{X_{p_{T}}}^{-1}A_{p_{T}}\Gamma_{X_{p_{T}}} & I_{p_{T}}
\end{array}\right)\right|=\left|\left(\begin{array}{cc}
I_{p_{T}} & A_{p_{T}}^{\intercal}\\
\Gamma_{X_{p_{T}}}^{-1}A_{p_{T}}\Gamma_{X_{p_{T}}} & I_{p_{T}}
\end{array}\right)\right|$, and it is exactly the determinant of $\widetilde{\Sigma}_{2p_{T}}^{-1}\Sigma_{2p_{T}}$.
Accordingly, applying the transpose of one matrix will not change
its determinant, we further know 
\begin{equation}\label{eq:two_parts_equal_VAR}
\left|2I_{2p_{T}}-\widetilde{\Sigma}_{2p_{T}}^{-1}\Sigma_{2p_{T}}\right|=\left|\widetilde{\Sigma}_{2p_{T}}^{-1}\Sigma_{2p_{T}}\right|=\left|\Sigma_{2p_{T}}\widetilde{\Sigma}_{2p_{T}}^{-1}\right|.
\end{equation}
By (\ref{eq:InvSigmaTitleSigma}),(\ref{eq:two_parts_equal_VAR}),
we deduce (\ref{eq:phiSq_VAR_simply2})
\begin{align}
\left|\left(2I_{2p_{T}}-\widetilde{\Sigma}_{2p_{T}}^{-1}\Sigma_{2p_{T}}\right)\Sigma_{2p_{T}}\widetilde{\Sigma}_{2p_{T}}^{-1}\right|^{\frac{1}{2}} & =\left|\widetilde{\Sigma}_{2p_{T}}^{-1}\Sigma_{2p_{T}}\right|\nonumber \\
 & =\left|I_{p_{T}}-A_{p_{T}}^{\intercal}\Gamma_{X_{p_{T}}}^{-1}A_{p_{T}}\Gamma_{X_{p_{T}}}\right|.\label{eq:denominator1}
\end{align}
By \ref{A:AB_pd}, we know all the eigenvalues of
$A_{p_{T}}^{\intercal}\Gamma_{X_{p_{T}}}^{-1}A_{p_{T}}\Gamma_{X_{p_{T}}}$
are real, and as the same as the eigenvalues of $\Gamma_{X_{p_{T}}}^{-1}A_{p_{T}}\Gamma_{X_{p_{T}}}A_{p_{T}}^{\intercal}$.
Rewrite (\ref{eq:phiSq_VAR}) according to (\ref{eq:phiSq_VAR_simply1})(\ref{eq:phiSq_VAR_simply2})(\ref{eq:denominator1})
\begin{equation}
\phi^{2}\left(p_{T}\right)=-1+\frac{1}{\left|I_{p_{T}}-\Gamma_{X_{p_{T}}}^{-1}A_{p_{T}}\Gamma_{X_{p_{T}}}A_{p_{T}}^{\intercal}\right|}.\label{eq:phiSq_simple_VAR}
\end{equation}
If we want to prove $\phi^2(p_T)$ is bounded by $M$, it is equivalent to show that the minimum eigenvalue of  $I_{p_{T}}-\Gamma_{X_{p_{T}}}^{-1}A_{p_{T}}\Gamma_{X_{p_{T}}}A_{p_{T}}^{\intercal}$ is larger than a non-zero absolute constant.
Noticing the stationary VAR(1) satisfying condition (\ref{eq:stationaryCondition_VAR}),
it further implies 
\[
\Sigma_{\xi_{p_{T}}}=\Gamma_{X_{p_{T}}}-A_{p_{T}}\Gamma_{X_{p_{T}}}A_{p_{T}}^{\intercal}=\Gamma_{X_{p_{T}}}\left(I_{p_{T}}-\Gamma_{X_{p_{T}}}^{-1}A_{p_{T}}\Gamma_{X_{p_{T}}}A_{p_{T}}^{\intercal}\right).
\]
then 
\begin{equation}
I_{p_{T}}-\Gamma_{X_{p_{T}}}^{-1}A_{p_{T}}\Gamma_{X_{p_{T}}}A_{p_{T}}^{\intercal}=\Gamma_{X_{p_{T}}}^{-1}\Sigma_{\xi_{p_{T}}}.\label{eq:GammaGammaxi_VAR}
\end{equation}
 It also shows that $I_{p_{T}}-\Gamma_{X_{p_{T}}}^{-1}A_{p_{T}}\Gamma_{X_{p_{T}}}A_{p_{T}}^{\intercal}$
is a positive definite matrix, and it is easy to tell the largest eigenvalue of  $I_{p_{T}}-\Gamma_{X_{p_{T}}}^{-1}A_{p_{T}}\Gamma_{X_{p_{T}}}A_{p_{T}}^{\intercal}$ is $1$
because of the low rank assumption (\textbf{A1}) of $A_{p_{T}}$. Then the question
turns to prove $\lambda_{\min}\left(\Gamma_{X_{p_{T}}}^{-1}\Sigma_{\xi_{p_{T}}}\right)>\epsilon>0$.
\[
\lambda_{\min}\left(\Gamma_{X_{p_{T}}}^{-1}\Sigma_{\xi_{p_{T}}}\right)=\frac{1}{\Vert\left(\Gamma_{X_{p_{T}}}^{-1}\Sigma_{\xi_{p_{T}}}\right)^{-1}\Vert}=\frac{1}{\norm{\Sigma_{\xi_{p_{T}}}^{-1}\Gamma_{X_{p_{T}}}}}.
\]
Applying the operator norm inequality, $\Vert\Sigma_{\xi_{p_{T}}}^{-1}\Gamma_{X_{p_{T}}}\Vert\leq\Vert\Sigma_{\xi_{p_{T}}}^{-1}\Vert\Vert\Gamma_{X_{p_{T}}}\Vert=\frac{\lambda_{\max}\left(\Gamma_{X_{p_{T}}}\right)}{\lambda_{\min}\left(\Sigma_{\xi_{p_{T}}}\right)}$.
According to our Assumption (\textbf{A2,A3}), we have 
\begin{equation}
\lambda_{\min}\left(\Gamma_{X_{p_{T}}}^{-1}\Sigma_{\xi_{p_{T}}}\right)>\frac{\zeta}{\delta}>0,\label{eq:lowBound_VAR}
\end{equation}
which also means $\lambda_{\min}\left(I_{p_{T}}-\Gamma_{X_{p_{T}}}^{-1}A_{p_{T}}\Gamma_{X_{p_{T}}}A_{p_{T}}^{\intercal}\right)>\frac{\zeta}{\delta}>0$. 
Noticing the determinant 
\[
\left|I_{p_{T}}-\Gamma_{X_{p_{T}}}^{-1}A_{p_{T}}\Gamma_{X_{p_{T}}}A_{p_{T}}^{\intercal}\right|=\prod_{i=1}^{p_{T}}\lambda_{i}\left(I_{p_{T}}-\Gamma_{X_{p_{T}}}^{-1}A_{p_{T}}\Gamma_{X_{p_{T}}}A_{p_{T}}^{\intercal}\right).
\]
Applying $\text{rank}(A_{p_{T}})=k$ assumption (\textbf{A1}) and $\lambda_{\min}\left(I_{p_{T}}-\Gamma_{X_{p_{T}}}^{-1}A_{p_{T}}\Gamma_{X_{p_{T}}}A_{p_{T}}^{\intercal}\right)>\frac{\zeta}{\delta}$,
we have 
\begin{align*}
\prod_{i=1}^{p_T}\lambda_{i}\left(I_{p_{T}}-\Gamma_{X_{p_{T}}}^{-1}A_{p_{T}}\Gamma_{X_{p_{T}}}A_{p_{T}}^{\intercal}\right) & =\prod_{i=p_T-k+1}^{p_T}\lambda_{i}\left(I_{p_{T}}-\Gamma_{X_{p_{T}}}^{-1}A_{p_{T}}\Gamma_{X_{p_{T}}}A_{p_{T}}^{\intercal}\right)\\
 & >\left(\frac{\zeta}{\delta}\right)^{k}
\end{align*}
Therefore, $\phi^{2}\left(p_{T}\right)<-1+\left(\frac{\delta}{\zeta}\right)^{k}<M$
for any $T\in\mathbb{N}^{+}$, which completes our proof. 
\end{proof}

\subsubsection{VARMA(1,1)}
Borrow the low-rank modeling strategy from \cite{Basu2014}.
Suppose a $p_{T}$-dimensional multivariate time series $\left\{ \boldsymbol{X}_{p_{T}} (t)\right\} _{t\in\mathbb{Z}}$ is driven by $\left\{\boldsymbol{F}(t):=\left(F_{1}(t),\ldots,F_{k}(t)\right)^{\intercal} \right\}_{t\in\mathbb{Z}}$, which is $k-$dimensional $\left(k\ll p_{T}\right)$ time series following a VAR(1) process, then the model (\textbf{M1}) can be interpreted as 
\begin{align}
\boldsymbol{X}_{p_{T}}\left(t\right) & =\Lambda_{p_T}\boldsymbol{F}\left(t\right)+\xi_{p_T}(t),\mathbb\quad\xi_{p_T}(t)\sim N_{p_{T}}\left(0,\Sigma_{\xi_{p_T}}\right),\mathbb\quad \cov\left(\xi_{p_T}(t),\xi_{p_T}(s)\right)=0\text{ if }t\neq s \label{M1:1} \\
\boldsymbol{F}\left(t\right) & =H\boldsymbol{F}\left(t-1\right)+\eta(t),\mathbb\quad\eta(t)\sim N_{k}\left(0,\Sigma_{\eta}\right),\mathbb\quad \cov\left(\eta(t),\eta(s)\right)=0\text{ if }t\neq s \label{M1:2}.
\end{align}
Assume the $p_{T}\times k$ matrix $\Lambda_{p_T}$ having full column rank
$k$, hence there exists its left inverse $\Lambda_{p_T}^{-1}$ such that $\Lambda_{p_T}^{-1}\Lambda_{p_T}=\Ib_{k}$. 
Hence, we will have the VARMA(1,1) model,
\begin{equation}\label{eq:build low rank}
\boldsymbol{X}_{p_{T}}\left(t\right) =L_{p_T}\boldsymbol{X}_{p_{T}}\left(t-1\right)+\epsilon_{p_T}(t),
\end{equation}
 where $L_{p_T}=\Lambda_{p_{T}} H \Lambda_{p_{T}}^{-1}$ with rank $k$, and the new error $\epsilon_{p_T}(t)=\Lambda_{p_T}(t)\eta(t) +\xi_{p_T}(t)-L_{p_T}(t)\xi_{p_T}(t-1)$  with  $\epsilon_{p_T}(t)\sim N_{p_{T}}\left(0,\Sigma_{\epsilon_{p_T}}\right)$.
\\ 
Having the variance-covariance matrix of $\boldsymbol{X}_{p_T}(t)$ from (\ref{M1:1})
\begin{equation}\label{eq:VarX_1}
\Gamma_{X_{p_T}} = \var(\boldsymbol{X}_{p_T}(t)) = \Lambda_{p_T}\Gamma_{F}\Lambda_{p_T}^\intercal+\Sigma_{\xi_{p_T}}.
\end{equation}
Also, we can compute the variance-covariance matrix of $\boldsymbol{X}_{p_T}(t)$  from (\ref{eq:build low rank}),
\begin{equation}\label{eq:VarX_2}
\begin{aligned}
\Gamma_{X_{p_{T}}}  &=L_{p_{T}}\Gamma_{X_{p_{T}}}L_{p_{T}}^\intercal +\Lambda_{p_{T}}\Sigma_{\eta}\Lambda_{p_{T}}^\intercal+\Sigma_{\xi_{p_{T}}}-L_{p_{T}}\Sigma_{\xi_{p_{T}}}L_{p_{T}}^\intercal\\
& = L_{p_{T}}\Gamma_{X_{p_{T}}}L_{p_{T}}^\intercal +\Lambda_{p_{T}}\Sigma_{\eta}\Lambda_{p_{T}}^\intercal+ (I_{p_T} -L_{p_{T}})\Sigma_{\xi_{p_{T}}} (I_{p_T} -L_{p_{T}})^\intercal.
\end{aligned}
\end{equation}
By (\ref{M1:1}), we can also have the covariance matrix between $\boldsymbol{X}_{p_T}(t+1)$, and  $\boldsymbol{X}_{p_T}(t)$,
\[
\Gamma_{X_{p_{T}}}(1) =\Lambda_{p_{T}}\Gamma_{F}\left(1\right)\Lambda_{p_{T}}^\intercal=\Lambda_{p_{T}} H\Gamma_{F}\Lambda_{p_{T}}^\intercal.
\]
$I_{k}$ can be decomposed as $\Lambda_{p_T}^{-1}\Lambda_{p_T}$, hence the above equation can be represented as $\Lambda_{p_{T}} H\Gamma_{F}\Lambda_{p_{T}}^\intercal = \Lambda_{p_{T}} H\Lambda_{p_{T}}^{-1}\Lambda_{p_{T}}\Gamma_{F}\Lambda_{p_{T}}^\intercal$. Noticing  (\ref{eq:VarX_1}) and $L_{p_T}=\Lambda_{p_{T}} H \Lambda_{p_{T}}^{-1}$, it yields 
\begin{equation}\label{eq:varX(1)}
\Gamma_{X_{p_{T}}}(1) = L_{p_{T}}\left(\Gamma_{X_{p_{T}}}-\Sigma_{\xi_{p_{T}}}\right).
\end{equation}

\begin{theorem}
Suppose $\left\{ \boldsymbol{X}_{p_{T}} (t)\right\} _{t\in\mathbb{Z}}$
be a $p_{T}-$dimensional multivariate stationary Markov Chain generated by (\ref{M1:1}) and (\ref{M1:2}). 
There is
\[
\underset{p_T\rightarrow\infty}{\lim}\phi^{2}(p_T)<M<\infty,
\]
if we assume, 
\begin{itemize}
    \item {\bf Assumption (A1)} $L_{p_T}=\Lambda_{p_{T}} H \Lambda_{p_{T}}^{-1}$ with fixed rank $k$, and $\norm{L_{p_T}}<\mu<1$, $\mu$ is an absolute constant not depend on $p_T$;
    \item {\bf Assumption (A2)}  $\lambda_{\min}(\Sigma_{\xi_{p_T}})>\delta> 0$, $\delta $ is an absolute constant not depend on $p_T$;
    \item and {\bf Assumption (A3)} $\lambda_{\max}(\Gamma_{X_{p_T}})<\zeta<\infty$, $\zeta$ is an absolute constant not depend on $p_T$.
\end{itemize}
\end{theorem}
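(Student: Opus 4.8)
The plan is to follow the same template as the VAR(1) proof: reduce the statement, via Theorem \ref{thm:rho-mixing}, to showing that $\phi^2(p_T)$ stays bounded by an absolute constant, then compute $\phi^2(p_T)$ in closed form using Gaussianity, and finally bound the relevant determinant from below. Since $\left\{\boldsymbol{X}_{p_T}(t)\right\}$ is a stationary Gaussian Markov chain, $\boldsymbol{X}_{p_T}(t)\sim N_{p_T}(0,\Gamma_{X_{p_T}})$ and $\boldsymbol{Z}_{2p_T}=(\boldsymbol{X}_{p_T}(t),\boldsymbol{X}_{p_T}(t+1))\sim N_{2p_T}(0,\Sigma_{2p_T})$, where now the off-diagonal block is $\Gamma_{X_{p_T}}(1)=L_{p_T}(\Gamma_{X_{p_T}}-\Sigma_{\xi_{p_T}})$ from (\ref{eq:varX(1)}), rather than $A_{p_T}\Gamma_{X_{p_T}}$. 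Repeating verbatim the determinant manipulation (\ref{eq:kernel})--(\ref{eq:denominator1}), one arrives at
\[
\phi^{2}(p_{T})=-1+\frac{1}{\left|I_{p_{T}}-\Gamma_{X_{p_{T}}}^{-1}\,\Gamma_{X_{p_{T}}}(1)^{\intercal}\,\Gamma_{X_{p_{T}}}^{-1}\,\Gamma_{X_{p_{T}}}(1)\right|}
=-1+\frac{1}{\left|I_{p_{T}}-\Gamma_{X_{p_{T}}}^{-1}\Gamma_{X_{p_{T}}}(1)\Gamma_{X_{p_{T}}}^{-1}\Gamma_{X_{p_{T}}}(1)^{\intercal}\right|},
\]
using that conjugating a matrix by $\Gamma_{X_{p_T}}$ and transposing leave the determinant unchanged. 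So the task reduces to bounding $\det\!\big(I_{p_T}-\Gamma_{X_{p_T}}^{-1}\Gamma_{X_{p_T}}(1)\Gamma_{X_{p_T}}^{-1}\Gamma_{X_{p_T}}(1)^{\intercal}\big)$ away from $0$.

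Next I would exploit the low-rank structure. Write $B_{p_T}:=\Gamma_{X_{p_T}}^{-1}\Gamma_{X_{p_T}}(1)\Gamma_{X_{p_T}}^{-1}\Gamma_{X_{p_T}}(1)^{\intercal}$. Because $\Gamma_{X_{p_T}}(1)=L_{p_T}(\Gamma_{X_{p_T}}-\Sigma_{\xi_{p_T}})$ and $L_{p_T}$ has rank $k$ (Assumption (A1)), every factor of $B_{p_T}$ that carries an $L_{p_T}$ has rank at most $k$, so $\rank(B_{p_T})\le k$. Hence $I_{p_T}-B_{p_T}$ has at least $p_T-k$ eigenvalues equal to $1$, and
\[
\left|I_{p_{T}}-B_{p_{T}}\right|=\prod_{i=p_T-k+1}^{p_T}\lambda_i\!\left(I_{p_{T}}-B_{p_{T}}\right),
\]
a product of at most $k$ factors. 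It therefore suffices to show each nonzero-index eigenvalue of $I_{p_T}-B_{p_T}$ lies in a fixed interval bounded away from $0$; equivalently, $\lambda_{\max}(B_{p_T})\le 1-\epsilon$ for an absolute $\epsilon>0$, together with an upper bound on $\lambda_{\min}(I_{p_T}-B_{p_T})$ being $\le 1$ (automatic since $B_{p_T}\succeq0$). As in the VAR(1) case I would then combine this with the crude bound that each such $\lambda_i(I_{p_T}-B_{p_T})$ is at most $1$ and at least $\epsilon$, giving $|I_{p_T}-B_{p_T}|\ge \epsilon^{k}$ and hence $\phi^2(p_T)\le -1+\epsilon^{-k}=:M$, independent of $T$.

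The main obstacle is producing the uniform spectral bound $\lambda_{\max}(B_{p_T})<1-\epsilon$. Here is the route I expect to work. Using $\norm{L_{p_T}}<\mu<1$ (Assumption (A1)) and $\Gamma_{X_{p_T}}(1)=L_{p_T}(\Gamma_{X_{p_T}}-\Sigma_{\xi_{p_T}})$, one estimates, with $\Gamma:=\Gamma_{X_{p_T}}$ and $\Sigma:=\Sigma_{\xi_{p_T}}$,
\[
\norm{B_{p_T}}=\norm{\Gamma^{-1}\Gamma(1)\Gamma^{-1}\Gamma(1)^{\intercal}}\le \norm{\Gamma^{-1/2}\Gamma(1)\Gamma^{-1}\Gamma(1)^{\intercal}\Gamma^{-1/2}}
\le \norm{\Gamma^{-1/2}L_{p_T}}^2\,\norm{(\Gamma-\Sigma)\Gamma^{-1}(\Gamma-\Sigma)}.
\]
One then needs $\Sigma\preceq\Gamma$ (which follows from (\ref{eq:VarX_1}), since $\Gamma-\Sigma=\Lambda_{p_T}\Gamma_F\Lambda_{p_T}^{\intercal}\succeq0$), so that $0\preceq \Gamma^{-1/2}(\Gamma-\Sigma)\Gamma^{-1/2}\preceq I$ and thus $\norm{(\Gamma-\Sigma)\Gamma^{-1}(\Gamma-\Sigma)}\le\norm{\Gamma-\Sigma}\le\norm{\Gamma}\le\zeta$ by (A3); and one controls $\norm{\Gamma^{-1/2}L_{p_T}}^2=\norm{L_{p_T}^{\intercal}\Gamma^{-1}L_{p_T}}\le \norm{\Gamma^{-1}}\,\mu^2$. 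The term $\norm{\Gamma^{-1}}=1/\lambda_{\min}(\Gamma)$ is bounded because $\Gamma\succeq\Sigma\succeq\delta I$ by (A2), giving $\norm{\Gamma^{-1}}\le 1/\delta$. Putting these together yields $\norm{B_{p_T}}\le \mu^2\zeta/\delta$; if this is $<1$ we are done, and in general one tightens the estimate — e.g. by writing $B_{p_T}=\Gamma^{-1/2}\big(\Gamma^{-1/2}L\,(\Gamma-\Sigma)\Gamma^{-1}(\Gamma-\Sigma)L^{\intercal}\Gamma^{-1/2}\big)\Gamma^{1/2}$ and bounding the similar symmetric matrix, using $(\Gamma-\Sigma)\Gamma^{-1}(\Gamma-\Sigma)\preceq(\Gamma-\Sigma)$ from $\Gamma^{-1}\preceq(\Gamma-\Sigma)^{-1}$ on the range of $\Lambda_{p_T}$ — to get $\norm{B_{p_T}}\le\mu^2<1$, which is the clean bound the hypotheses are designed to deliver. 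Either way, $\lambda_{\max}(B_{p_T})$ is bounded by an absolute constant strictly below $1$, completing the estimate of $\phi^2(p_T)$ and, via Theorem \ref{thm:rho-mixing}, proving $\rho(\boldsymbol{X}_{p_T},n)\le M\delta^n$.
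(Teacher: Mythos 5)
Your overall architecture matches the paper's: reduce $\phi^{2}(p_{T})$ to $-1+\bigl|I_{p_{T}}-B_{p_{T}}\bigr|^{-1}$ with $B_{p_{T}}$ built from the lag-one cross-covariance $\Gamma_{X_{p_{T}}}(1)=L_{p_{T}}(\Gamma_{X_{p_{T}}}-\Sigma_{\xi_{p_{T}}})$, observe that $\rank(B_{p_{T}})\leq k$ so the determinant is a product of at most $k$ nontrivial eigenvalues, and then bound each such eigenvalue of $I_{p_{T}}-B_{p_{T}}$ away from zero. All of that is sound and is exactly what the paper does.

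The gap is in the one step that actually carries the theorem: the uniform bound $\lambda_{\max}(B_{p_{T}})\leq 1-\epsilon$. The only estimate you derive concretely is $\Vert B_{p_{T}}\Vert\leq\mu^{2}\zeta/\delta$, and nothing in Assumptions (A1)--(A3) forces $\mu^{2}\zeta/\delta<1$ (indeed $\zeta\geq\lambda_{\max}(\Gamma_{X_{p_{T}}})\geq\lambda_{\min}(\Sigma_{\xi_{p_{T}}})>\delta$, so $\zeta/\delta>1$ always). Your proposed ``tightening'' does not rescue this: the inequality $(\Gamma-\Sigma)\Gamma^{-1}(\Gamma-\Sigma)\preceq\Gamma-\Sigma$ is true, but after applying it you are left with $\Vert\Gamma^{-1/2}L(\Gamma-\Sigma)L^{\intercal}\Gamma^{-1/2}\Vert$, and conjugation by $\Gamma^{1/2}$ does not preserve the operator norm, so this is again only $\leq\mu^{2}\zeta/\delta$, not $\mu^{2}$. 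The missing ingredient is the stationarity (Yule--Walker) identity (\ref{eq:VarX_2}), which you never invoke. The paper factors $B_{p_{T}}$ so that the middle piece is $L_{p_{T}}^{\intercal}\Gamma_{X_{p_{T}}}^{-1}L_{p_{T}}\Gamma_{X_{p_{T}}}$, uses (\ref{eq:VarX_2}) to write
\begin{equation*}
\Gamma_{X_{p_{T}}}^{-1}L_{p_{T}}\Gamma_{X_{p_{T}}}L_{p_{T}}^{\intercal}=I_{p_{T}}-\Gamma_{X_{p_{T}}}^{-1}\left\{\Lambda_{p_{T}}\Sigma_{\eta}\Lambda_{p_{T}}^{\intercal}+\left(I_{p_{T}}-L_{p_{T}}\right)\Sigma_{\xi_{p_{T}}}\left(I_{p_{T}}-L_{p_{T}}\right)^{\intercal}\right\},
\end{equation*}
and then lower-bounds the subtracted term's smallest eigenvalue by $\delta(1-\mu)^{2}/\zeta$ using (A2), (A3), and $\sigma_{\min}(I_{p_{T}}-L_{p_{T}})\geq 1-\Vert L_{p_{T}}\Vert\geq 1-\mu$ (Remark \ref{A:singularValue}); this is the only place the hypothesis $\Vert L_{p_{T}}\Vert<\mu<1$ is actually used, and it yields $\Vert B_{p_{T}}\Vert\leq 1-\delta(1-\mu)^{2}/\zeta$ and hence $\phi^{2}(p_{T})\leq -1+\{\zeta/(\delta(1-\mu)^{2})\}^{k}$. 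Without some use of the stationarity equation (at the level of $\Gamma_{X_{p_{T}}}$ or of $\Gamma_{F}=H\Gamma_{F}H^{\intercal}+\Sigma_{\eta}$) there is no way to get a bound strictly below $1$, so as written your argument does not close.
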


\begin{proof}
Similarly as VAR(1), we have 
\begin{align}
\phi^{2}\left(p_{T}\right) & =-1+\frac{\left|\widetilde{\Sigma}_{2p_{T}}\right|^{\frac{1}{2}}\left|\left(2\Sigma_{2p_{T}}^{-1}-\widetilde{\Sigma}_{2p_{T}}^{-1}\right)^{-1}\right|^{\frac{1}{2}}}{\left|\Sigma_{2p_{T}}\right|}\nonumber \\
 & =-1+\frac{1}{\left|\left(2I_{2p_{T}}-\widetilde{\Sigma}_{2p_{T}}^{-1}\Sigma_{2p_{T}}\right)\Sigma_{2p_{T}}\widetilde{\Sigma}_{2p_{T}}^{-1}\right|^{\frac{1}{2}}},\label{eq:phi_VARMA}
\end{align}
where,
\begin{equation}
\Sigma_{2p_{T}}=\begin{pmatrix}\Gamma_{X_{p_{T}}} & \left(\Gamma_{X_{p_{T}}}-\Sigma_{\xi_{p_{T}}}\right)L_{p_{T}}^{\intercal}\\
L_{p_{T}}\left(\Gamma_{X_{p_{T}}}-\Sigma_{\xi_{p_{T}}}\right) & \Gamma_{X_{p_{T}}}
\end{pmatrix}\label{eq:Sigma_VARAM},
\end{equation}
\begin{equation}
\widetilde{\Sigma}_{2p_{T}}=\left(\begin{array}{cc}
\Gamma_{X_{p_{T}}} & \boldsymbol{0}\\
\boldsymbol{0} & \Gamma_{X_{p_{T}}}
\end{array}\right).\label{eq:Sigma_title_VARAM}
\end{equation}
 Therefore, 
\begin{align}
\Sigma_{2p_{T}}\widetilde{\Sigma}_{2p_{T}}^{-1} & =\begin{pmatrix}\Gamma_{X_{p_{T}}} & \left(\Gamma_{X_{p_{T}}}-\Sigma_{\xi_{p_{T}}}\right)L_{p_{T}}^{\intercal}\\
L_{p_{T}}\left(\Gamma_{X_{p_{T}}}-\Sigma_{\xi_{p_{T}}}\right) & \Gamma_{X_{p_{T}}}
\end{pmatrix}\left(\begin{array}{cc}
\Gamma_{X_{p_{T}}} & \boldsymbol{0}\\
\boldsymbol{0} & \Gamma_{X_{p_{T}}}
\end{array}\right)^{-1}\nonumber \\
 & =\left(\begin{array}{cc}
I_{p_{T}} & \left(\Gamma_{X_{p_{T}}}-\Sigma_{\xi_{p_{T}}}\right)L_{p_{T}}^{\intercal}\Gamma_{X_{p_{T}}}^{-1}\\
L_{p_{T}}\left(\Gamma_{X_{p_{T}}}-\Sigma_{\xi_{p_{T}}}\right)\Gamma_{X_{p_{T}}}^{-1} & I_{p_{T}}
\end{array}\right),\label{eq:Sigam_SigmaTitle_VARMA}
\end{align}
and 
\begin{equation}
\widetilde{\Sigma}_{2p_{T}}^{-1}\Sigma_{2p_{T}}=\left(\Sigma_{2p_{T}}\widetilde{\Sigma}_{2p_{T}}^{-1}\right)^{\intercal}=\left(\begin{array}{cc}
I_{p_{T}} & \Gamma_{X_{p_{T}}}^{-1}\left(\Gamma_{X_{p_{T}}}-\Sigma_{\xi_{p_{T}}}\right)L_{p_{T}}^{\intercal}\\
\Gamma_{X_{p_{T}}}^{-1}L_{p_{T}}\left(\Gamma_{X_{p_{T}}}-\Sigma_{\xi_{p_{T}}}\right) & I_{p_{T}}
\end{array}\right).\label{eq:SigmaTitle_Sigam_VARMA}
\end{equation}
Same trick as VAR(1) (\ref{eq:two_parts_equal_VAR}), $\left|2I_{2p_{T}}-\widetilde{\Sigma}_{2p_{T}}^{-1}\Sigma_{2p_{T}}\right|=\left|\widetilde{\Sigma}_{2p_{T}}^{-1}\Sigma_{2p_{T}}\right|=\left|\Sigma_{2p_{T}}\widetilde{\Sigma}_{2p_{T}}^{-1}\right|$,
then we can further deduce the denominator of (\ref{eq:phi_VARMA})
as
\[
\left|\left(2I_{2p_{T}}-\widetilde{\Sigma}_{2p_{T}}^{-1}\Sigma_{2p_{T}}\right)\Sigma_{2p_{T}}\widetilde{\Sigma}_{2p_{T}}^{-1}\right|^{\frac{1}{2}}=\left|I_{p_{T}}-\Gamma_{X_{p_{T}}}^{-1}\left(\Gamma_{X_{p_{T}}}-\Sigma_{\xi_{p_{T}}}\right)L_{p_{T}}^{\intercal}\Gamma_{X_{p_{T}}}^{-1}L_{p_{T}}\left(\Gamma_{X_{p_{T}}}-\Sigma_{\xi_{p_{T}}}\right)\right|.
\]
And then rewrite (\ref{eq:phi_VARMA}) 
\begin{equation}
\phi^{2}\left(p_{T}\right)=-1+\frac{1}{\left|I_{p_{T}}-\Gamma_{X_{p_{T}}}^{-1}\left(\Gamma_{X_{p_{T}}}-\Sigma_{\xi_{p_{T}}}\right)L_{p_{T}}^{\intercal}\Gamma_{X_{p_{T}}}^{-1}L_{p_{T}}\left(\Gamma_{X_{p_{T}}}-\Sigma_{\xi_{p_{T}}}\right)\right|}.\label{eq:phiSq_simple_VARMA}
\end{equation}
$\Gamma_{X_{p_{T}}}^{-1}$ is positive definite, and $\left(\Gamma_{X_{p_{T}}}-\Sigma_{\xi_{p_{T}}}\right)L_{p_{T}}^{\intercal}\Gamma_{X_{p_{T}}}^{-1}L_{p_{T}}\left(\Gamma_{X_{p_{T}}}-\Sigma_{\xi_{p_{T}}}\right)$
is positive semidefinite, therefore, all the eigenvalues of their
product $\Gamma_{X_{p_{T}}}^{-1}\cdot\left(\Gamma_{X_{p_{T}}}-\Sigma_{\xi_{p_{T}}}\right)L_{p_{T}}^{\intercal}\Gamma_{X_{p_{T}}}^{-1}L_{p_{T}}\left(\Gamma_{X_{p_{T}}}-\Sigma_{\xi_{p_{T}}}\right)$
will be real non-negative numbers according to Lemma \ref{A:AB_pd}.\textbf{
}Same as VAR(1), we need to show $\lambda_{\min}\left(I_{p_{T}}-\Gamma_{X_{p_{T}}}^{-1}\left(\Gamma_{X_{p_{T}}}-\Sigma_{\xi_{p_{T}}}\right)L_{p_{T}}^{\intercal}\Gamma_{X_{p_{T}}}^{-1}L_{p_{T}}\left(\Gamma_{X_{p_{T}}}-\Sigma_{\xi_{p_{T}}}\right)\right)>M>0$,
which is also equivalent to show 
\begin{equation}
\Vert\Gamma_{X_{p_{T}}}^{-1}\left(\Gamma_{X_{p_{T}}}-\Sigma_{\xi_{p_{T}}}\right)L_{p_{T}}^{\intercal}\Gamma_{X_{p_{T}}}^{-1}L_{p_{T}}\left(\Gamma_{X_{p_{T}}}-\Sigma_{\xi_{p_{T}}}\right)\Vert<M'<1.\label{ineq:aim_prim}
\end{equation}
By $\Gamma_{X_{p_{T}}}^{-1}\left(\Gamma_{X_{p_{T}}}-\Sigma_{\xi_{p_{T}}}\right)L_{p_{T}}^{\intercal}\Gamma_{X_{p_{T}}}^{-1}L_{p_{T}}\left(\Gamma_{X_{p_{T}}}-\Sigma_{\xi_{p_{T}}}\right) = \Gamma_{X_{p_{T}}}^{-1}\left(\Gamma_{X_{p_{T}}}-\Sigma_{\xi_{p_{T}}}\right)\cdot L_{p_{T}}^{\intercal}\Gamma_{X_{p_{T}}}^{-1}L_{p_{T}}\Gamma_{X_{p_{T}}}\cdot\Gamma_{X_{p_{T}}}^{-1}\left(\Gamma_{X_{p_{T}}}-\Sigma_{\xi_{p_{T}}}\right)$, and applying operator norm inequality, we further have
\begin{equation}
\begin{aligned}
\Vert\Gamma_{X_{p_{T}}}^{-1}\left(\Gamma_{X_{p_{T}}}-\Sigma_{\xi_{p_{T}}}\right)L_{p_{T}}^{\intercal}\Gamma_{X_{p_{T}}}^{-1}L_{p_{T}}\left(\Gamma_{X_{p_{T}}}-\Sigma_{\xi_{p_{T}}}\right)\Vert & 
  \leq\Vert\Gamma_{X_{p_{T}}}^{-1}\left(\Gamma_{X_{p_{T}}}-\Sigma_{\xi_{p_{T}}}\right)\Vert^{2}\\
 & \cdot \Vert L_{p_{T}}^{\intercal}\Gamma_{X_{p_{T}}}^{-1}L_{p_{T}}\Gamma_{X_{p_{T}}}\Vert.\label{ineq:part_aim}
\end{aligned}
\end{equation}
Hence, it enough to show that $\Vert\Gamma_{X_{p_{T}}}^{-1}\left(\Gamma_{X_{p_{T}}}-\Sigma_{\xi_{p_{T}}}\right)\Vert<1$,
$\Vert L_{p_{T}}^{\intercal}\Gamma_{X_{p_{T}}}^{-1}L_{p_{T}}\Gamma_{X_{p_{T}}}\Vert<M'<1$
where $M'$ is a constant not dependent on $p_{T}$. 
$\Gamma_{X_{p_{T}}}-\Sigma_{\xi_{p_{T}}}=\Lambda_{p_{T}}\Gamma_{F}\Lambda_{p_{T}}^{\intercal}\succeq0$
By (\ref{eq:VarX_1}), hence all eigenvalues of $\Gamma_{X_{p_{T}}}^{-1}\left(\Gamma_{X_{p_{T}}}-\Sigma_{\xi_{p_{T}}}\right)=\left(I_{p_{T}}-\Gamma_{X_{p_{T}}}^{-1}\Sigma_{\xi_{p_{T}}}\right)$
should be real and non-negative \ref{A:AB_pd}. Therefore,
for $i\in\left[p_{T}\right]$ we have
\begin{align}
\lambda_{i}\left(I_{p_{T}}-\Gamma_{X_{p_{T}}}^{-1}\Sigma_{\xi_{p_{T}}}\right) & \geq0\label{ineq:I-gammaInvSigma}\\
\lambda_{i}\left(\Gamma_{X_{p_{T}}}^{-1}\Sigma_{\xi_{p_{T}}}\right) & >0.\label{ineq:GammaInvSigma}
\end{align}
(\ref{ineq:I-gammaInvSigma})(\ref{ineq:GammaInvSigma}) yield 
\[
0\leq\lambda_{i}\left(I_{p_{T}}-\Gamma_{X_{p_{T}}}^{-1}\Sigma_{\xi_{p_{T}}}\right)<1\mathbb\quad i\in\left[p_{T}\right],
\]
hence we have 
\begin{equation}
\Vert\Gamma_{X_{p_{T}}}^{-1}\left(\Gamma_{X_{p_{T}}}-\Sigma_{\xi_{p_{T}}}\right)\Vert<1.\label{eq:ineq.part1}
\end{equation}
Accordingly, $\Gamma_{X_{p_{T}}}^{-1}L_{p_{T}}\Gamma_{X_{p_{T}}}L_{p_{T}}^{\intercal}$
has the same real eigenvalues as $L_{p_{T}}^{\intercal}\Gamma_{X_{p_{T}}}^{-1}L_{p_{T}}\Gamma_{X_{p_{T}}}$
\ref{A:AB_pd}, then it is sufficient to show that
$\Vert\Gamma_{X_{p_{T}}}^{-1}L_{p_{T}}\Gamma_{X_{p_{T}}}L_{p_{T}}^{\intercal}\Vert<M'<1$. 
Recalling (\ref{eq:VarX_2}), 
we have
\begin{equation}
\Gamma_{X_{p_{T}}}^{-1}L_{p_{T}}\Gamma_{X_{p_{T}}}L_{p_{T}}^{\intercal}=I_{p_{T}}-\Gamma_{X_{p_{T}}}^{-1}\left\{ \Lambda_{p_{T}}\Sigma_{\eta}\Lambda_{p_{T}}^{\intercal}+\left(I_{p_{T}}-L_{p_{T}}\right)\Sigma_{\xi_{p_{T}}}\left(I_{p_{T}}-L_{p_{T}}\right)^{\intercal}\right\} .\label{eq:change_cov_VARMA}
\end{equation}
Then it is equivalent to show $\lambda_{\max}\left(I_{p_{T}}-\Gamma_{X_{p_{T}}}^{-1}\left\{ \Lambda_{p_{T}}\Sigma_{\eta}\Lambda_{p_{T}}^{\intercal}+\left(I_{p_{T}}-L_{p_{T}}\right)\Sigma_{\xi_{p_{T}}}\left(I_{p_{T}}-L_{p_{T}}\right)^{\intercal}\right\} \right)<M'<1$,
\begin{align*}
\Vert\Gamma_{X_{p_{T}}}^{-1}L_{p_{T}}\Gamma_{X_{p_{T}}}L_{p_{T}}^{\intercal}\Vert & =\lambda_{\max}\left(I_{p_{T}}-\Gamma_{X_{p_{T}}}^{-1}\left\{ \Lambda_{p_{T}}\Sigma_{\eta}\Lambda_{p_{T}}^{\intercal}+\left(I_{p_{T}}-L_{p_{T}}\right)\Sigma_{\xi_{p_{T}}}\left(I_{p_{T}}-L_{p_{T}}\right)^{\intercal}\right\} \right)\\
 & =1-\lambda_{\min}\left(\Gamma_{X_{p_{T}}}^{-1}\left\{ \Lambda_{p_{T}}\Sigma_{\eta}\Lambda_{p_{T}}^{\intercal}+\left(I_{p_{T}}-L_{p_{T}}\right)\Sigma_{\xi_{p_{T}}}\left(I_{p_{T}}-L_{p_{T}}\right)^{\intercal}\right\} \right).
\end{align*}
Using the property of eigenvalue, we have 
\begin{align*}
\lambda_{\min}\left(\Gamma_{X_{p_{T}}}^{-1}\left\{ \Lambda_{p_{T}}\Sigma_{\eta}\Lambda_{p_{T}}^{\intercal}+\left(I_{p_{T}}-L_{p_{T}}\right)\Sigma_{\xi_{p_{T}}}\left(I_{p_{T}}-L_{p_{T}}\right)^{\intercal}\right\} \right) & \geq\lambda_{\min}(\Gamma_{X_{p_{T}}}^{-1})\\
 & \mathbb\quad\cdot\lambda_{\min}\{\Lambda_{p_{T}}\Sigma_{\eta}\Lambda_{p_{T}}^{\intercal} \\
 & \mathbb\quad +\left(I_{p_{T}}-L_{p_{T}}\right)\Sigma_{\xi_{p_{T}}}\left(I_{p_{T}}-L_{p_{T}}\right)^{\intercal}\}.
\end{align*}
Using the Assumption (\textbf{A1) }that the rank of $L_{p_{T}}$ is
$k$, we have $\lambda_{\min}\left(\Lambda_{p_{T}}\Sigma_{\eta}\Lambda_{p_{T}}^{\intercal}\right)=0$.
Hence,\textbf{ }\\
\begin{align*}
\lambda_{\min}\left\{\Lambda_{p_{T}}\Sigma_{\eta}\Lambda_{p_{T}}^{\intercal}+\left(I_{p_{T}}-L_{p_{T}}\right)\Sigma_{\xi_{p_{T}}}\left(I_{p_{T}}-L_{p_{T}}\right)^{\intercal}\right\} & \geq\lambda_{\min}\left(\Lambda_{p_{T}}\Sigma_{\eta}\Lambda_{p_{T}}^{\intercal}\right)\\
&+\lambda_{\min}\left\{\left(I_{p_{T}}-L_{p_{T}}\right)\Sigma_{\xi_{p_{T}}}\left(I_{p_{T}}-L_{p_{T}}\right)^{\intercal}\right\}\\
 & =\lambda_{\min}\left\{\left(I_{p_{T}}-L_{p_{T}}\right)\Sigma_{\xi_{p_{T}}}\left(I_{p_{T}}-L_{p_{T}}\right)^{\intercal}\right\}.
\end{align*}
Noticing $\lambda_{\min}(\Gamma_{X_{p_{T}}}^{-1})=\frac{1}{\Vert\Gamma_{X_{p_{T}}}\Vert}$,
we further have
\[
\lambda_{\min}\left(\Gamma_{X_{p_{T}}}^{-1}\left\{ \Lambda_{p_{T}}\Sigma_{\eta}\Lambda_{p_{T}}^{\intercal}+\left(I_{p_{T}}-L_{p_{T}}\right)\Sigma_{\xi_{p_{T}}}\left(I_{p_{T}}-L_{p_{T}}\right)^{\intercal}\right\} \right)\geq\frac{\lambda_{\min}\left\{\left(I_{p_{T}}-L_{p_{T}}\right)\Sigma_{\xi_{p_{T}}}\left(I_{p_{T}}-L_{p_{T}}\right)^{\intercal}\right\}}{\Vert\Gamma_{X_{p_{T}}}\Vert}.
\]
Using 
\[
\lambda_{\min}\left(\left(I_{p_{T}}-L_{p_{T}}\right)\Sigma_{\xi_{p_{T}}}\left(I_{p_{T}}-L_{p_{T}}\right)^{\intercal}\right)=\frac{1}{\Big\Vert\left\{ \left(I_{p_{T}}-L_{p_{T}}\right)\Sigma_{\xi_{p_{T}}}\left(I_{p_{T}}-L_{p_{T}}\right)^{\intercal}\right\} ^{-1}\Big\Vert},
\]
and recalling the operator norm inequality, we have 
\begin{align*}
\frac{\lambda_{\min}\left\{\left(I_{p_{T}}-L_{p_{T}}\right)\Sigma_{\xi_{p_{T}}}\left(I_{p_{T}}-L_{p_{T}}\right)^{\intercal}\right\}}{\Vert\Gamma_{X_{p_{T}}}\Vert} & \geq\frac{1}{\Vert\Gamma_{X_{p_{T}}}\Vert\Vert\left(I_{p_{T}}-L_{p_{T}}\right)^{-1}\Vert^{2}\Vert\Sigma_{\xi_{p_{T}}}^{-1}\Vert}.
\end{align*}
Again, by $\frac{1}{\Vert\Sigma_{\xi_{p_{T}}}^{-1}\Vert}=\lambda_{\min}(\Sigma_{\xi_{p_{T}}})$,
and the Remark \ref{A:singularValue}, $\frac{1}{\Vert\left(I_{p_{T}}-L_{p_{T}}\right)^{-1}\Vert}=\sigma_{\min}(I_{p_{T}}-L_{p_{T}})\geq1-\Vert L_{p_{T}}\Vert$,
we have 
\[
\frac{1}{\Vert\Gamma_{X_{p_{T}}}\Vert\Vert\left(I_{p_{T}}-L_{p_{T}}\right)^{-1}\Vert^{2}\Vert\Sigma_{\xi_{p_{T}}}^{-1}\Vert}\geq\frac{\lambda_{\min}(\Sigma_{\xi_{p_{T}}})\left(1-\Vert L_{p_{T}}\Vert\right)^{2}}{\Vert\Gamma_{X_{p_{T}}}\Vert}.
\]
Applying the Assumption \textbf{A2 }and \textbf{A3, }we further have
\[
\frac{\lambda_{\min}(\Sigma_{\xi_{p_{T}}})\left(1-\Vert L_{p_{T}}\Vert\right)^{2}}{\Vert\Gamma_{X_{p_{T}}}\Vert}\geq\frac{\delta\left(1-\mu\right)^{2}}{\zeta},
\]
 hence $\lambda_{\min}\left(\Gamma_{X_{p_{T}}}^{-1}\left\{ \Lambda_{p_{T}}\Sigma_{\eta}\Lambda_{p_{T}}^{\intercal}+\left(I_{p_{T}}-L_{p_{T}}\right)\Sigma_{\xi_{p_{T}}}\left(I_{p_{T}}-L_{p_{T}}\right)^{\intercal}\right\} \right)\geq\frac{\delta\left(1-\mu\right)^{2}}{\zeta}$,
hence 
\begin{equation}
\Vert\Gamma_{X_{p_{T}}}^{-1}L_{p_{T}}\Gamma_{X_{p_{T}}}L_{p_{T}}^{\intercal}\Vert\leq1-\frac{\delta\left(1-\mu\right)^{2}}{\zeta}.\label{ineq:part2}
\end{equation}
By (\ref{ineq:part_aim})(\ref{eq:ineq.part1})(\ref{ineq:part2}),
we further have 
\begin{equation}
\Vert\Gamma_{X_{p_{T}}}^{-1}\left(\Gamma_{X_{p_{T}}}-\Sigma_{\xi_{p_{T}}}\right)L_{p_{T}}^{\intercal}\Gamma_{X_{p_{T}}}^{-1}L_{p_{T}}\left(\Gamma_{X_{p_{T}}}-\Sigma_{\xi_{p_{T}}}\right)\Vert\leq1-\frac{\delta\left(1-\mu\right)^{2}}{\zeta}.\label{ineq: aim_part_proved_VARMA}
\end{equation}
Back to the determinant of $\phi^{2}\left(p_{T}\right)$ (\ref{eq:phi_VARMA}),
with $\text{rank}\left(L_{p_{T}}\right)=k$ Assumption \textbf{A1}
and (\ref{ineq: aim_part_proved_VARMA}), we have 
\begin{align*}
\phi^{2}\left(p_{T}\right) & =-1+\frac{1}{\prod_{i=1}^{p_{T}}\left\{ 1-\lambda_{i}\left(\Gamma_{X_{p_{T}}}^{-1}\left(\Gamma_{X_{p_{T}}}-\Sigma_{\xi_{p_{T}}}\right)L_{p_{T}}^{\intercal}\Gamma_{X_{p_{T}}}^{-1}L_{p_{T}}\left(\Gamma_{X_{p_{T}}}-\Sigma_{\xi_{p_{T}}}\right)\right)\right\} }\\
 & \leq-1+\left(\frac{\zeta}{\delta\left(1-\mu\right)^{2}}\right)^{k}<\infty.
\end{align*}
This completes the proof. 
\end{proof}

\section*{Acknowledgement}
The work has been done during 2017, as an independent study project under supervision of Dr. Fang Han. The author is very grateful for the helpful guidance and discussion with Dr.~Han.

\bibliographystyle{apalike}
\bibliography{main}

\newpage


\appendix
\section{Auxiliary lemmas}

\begin{proposition}\label{A:prop_expodecay}
\cite{BradleyMixing} Suppose $a_{1},a_{2},\ldots,$ is a nonincreasing sequence
of number in $\left[0,\infty\right]$, suppose $M$ is a positive
integer. If $a_{nM}\longrightarrow0$ at least exponentially fast
as $n\rightarrow\infty$, then $a_{n}\rightarrow0$ at least exponentially
fast as $n\rightarrow\infty$.
\end{proposition}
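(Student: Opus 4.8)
The plan is to turn the phrase ``at least exponentially fast'' into an explicit geometric bound and then transfer the decay rate from the sparse index $nM$ down to the full index $n$, using only that $\{a_m\}$ is nonincreasing. First I would fix notation: ``$a_{nM}\to 0$ at least exponentially fast'' means there exist constants $C<\infty$, $r\in(0,1)$ and $n_0\in\mathbb{N}$ with $a_{nM}\le Cr^{n}$ for all $n\ge n_0$ (equivalently $\limsup_{n}a_{nM}^{1/n}<1$), and in particular $a_{nM}<\infty$ for $n\ge n_0$. Then, given an arbitrary $n\ge n_0M$, I would divide: write $n=qM+s$ with $q=\lfloor n/M\rfloor$ and $0\le s<M$, noting $q\ge n_0$ and $qM\le n$.

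The key step is a one-line sandwich: because the sequence is nonincreasing and $qM\le n$, we have $a_n\le a_{qM}\le Cr^{q}$; and since $q\ge n/M-1$ while $0<r<1$, this gives $a_n\le Cr^{\,n/M-1}=\frac{C}{r}\bigl(r^{1/M}\bigr)^{n}$. Setting $A:=C/r<\infty$ and $\gamma:=-(\log r)/M>0$, I obtain $a_n\le A e^{-\gamma n}$ for every $n\ge n_0M$, which is exactly the assertion that $a_n\to 0$ at least exponentially fast. If a bound valid for all $n$ is wanted, one enlarges $A$ to absorb the finitely many initial terms $a_1,\dots,a_{n_0M-1}$ whenever those are finite; if some early terms are $+\infty$ this is harmless, the conclusion being asymptotic.

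I do not anticipate a real obstacle; the proof is entirely elementary. The only things to be careful about are (i) fixing the precise meaning of ``at least exponentially fast'' (an eventual geometric upper bound, with $+\infty$ values permitted at small indices), and (ii) the index arithmetic around the floor $q=\lfloor n/M\rfloor$: that floor costs exactly one extra factor $r^{-1}$, and the rate in the fine index is the $M$-th root $r^{1/M}\in(0,1)$ of the coarse-index rate $r$. Monotonicity is used exactly once, to replace $a_n$ by the nearest preceding multiple-of-$M$ term $a_{qM}$; without it the statement is false, so I would flag that as the structurally essential hypothesis.
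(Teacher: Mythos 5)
Your argument is correct. Note that the paper itself gives no proof of this proposition --- it is simply quoted from Bradley's monograph --- so there is nothing in the text to compare against; your elementary derivation (fix $C$, $r\in(0,1)$ with $a_{nM}\le Cr^{n}$, set $q=\lfloor n/M\rfloor$, use monotonicity to get $a_{n}\le a_{qM}\le Cr^{q}\le (C/r)\bigl(r^{1/M}\bigr)^{n}$) is the standard one and supplies exactly the bound $a_{n}\le Ae^{-\gamma n}$ that Lemma 2.1 of the paper invokes. Your two cautionary remarks --- pinning down the meaning of ``at least exponentially fast'' and flagging monotonicity as the indispensable hypothesis --- are both apt, and the treatment of possibly infinite initial terms is handled correctly.
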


\begin{proposition} \label{A:prop:markov rho}
Suppose $\left\{ \boldsymbol{X}_{p_{T}}(t)\right\} _{t\in\mathbb{Z}}$ is $p_T$-dimensional multivariate Markov Chain, then the following statements hold:
\begin{itemize}
    \item any $i,j\in\mathbb{Z}$, with $i\leq j$, $\rho (\mathcal{F}_{-\infty}^{i},\mathcal{F}_{j}^{\infty}) =\rho(\mathcal{F}_{j}^{j},\mathcal{F}_{i}^{i}) $;
    \item and $ \forall$ $i,j,k\in\mathbb{Z}$, with $i\leq j\leq k$, $\rho(\mathcal{F}_{i}^{i},\mathcal{F}_{k}^{k}) \leq\rho(\mathcal{F}_{i}^{i},\mathcal{F}_{j}^{j}) \cdot\rho(\mathcal{F}_{j}^{j},\mathcal{F}_{k}^{k})$.
\end{itemize}
\end{proposition}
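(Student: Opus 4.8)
The plan is to base both identities on two standard facts: a variational (operator-norm) reformulation of the maximal correlation coefficient, and the conditional-independence form of the Markov property. For $\sigma$-fields $\mathcal{A},\mathcal{B}$ write $L_0^2(\mathcal{A})$ for the mean-zero elements of $\mathcal{L}^2_{\mathrm{real}}(\mathcal{A})$. Since $\operatorname{Corr}(f,g)=\E[fg]/(\|f\|_2\|g\|_2)$ for centered $f,g$, one has the symmetric reformulation
\[
\rho(\mathcal{A},\mathcal{B})=\sup_{\substack{f\in L_0^2(\mathcal{A})\\ \|f\|_2=1}}\big\|\E[f\given\mathcal{B}]\big\|_2,
\]
which follows from $\E[fg]=\langle\E[f\given\mathcal{B}],g\rangle$ and Cauchy--Schwarz (optimizing over $g$). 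I will combine this with the Markov property in the form that, for any split time $s$, the past $\mathcal{F}_{-\infty}^{s}$ and the future $\mathcal{F}_{s}^{\infty}$ are conditionally independent given $\boldsymbol{X}_{p_T}(s)$; equivalently, $\E[g\given\mathcal{F}_{-\infty}^{s}]=\E[g\given\boldsymbol{X}_{p_T}(s)]$ whenever $g\in L^2(\mathcal{F}_{s}^{\infty})$, and symmetrically. I record the induced reduction: if $a\in L^2(\mathcal{F}_{-\infty}^{s})$ and $b\in L^2(\mathcal{F}_{s}^{\infty})$, then replacing $a$ by $\E[a\given\boldsymbol{X}_{p_T}(s)]$ leaves $\E[ab]$ unchanged, and likewise for $b$.

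For the first identity, the inequality $\rho(\mathcal{F}_{-\infty}^{i},\mathcal{F}_{j}^{\infty})\ge\rho(\mathcal{F}_{i}^{i},\mathcal{F}_{j}^{j})$ is immediate from monotonicity, since $\mathcal{F}_i^i\subseteq\mathcal{F}_{-\infty}^i$ and $\mathcal{F}_j^j\subseteq\mathcal{F}_j^\infty$ enlarge the feasible sets in the supremum. For the reverse inequality I would take centered unit-norm $f\in L_0^2(\mathcal{F}_{-\infty}^{i})$ and $g\in L_0^2(\mathcal{F}_{j}^{\infty})$ and perform a two-step reduction. Using $i\le j$, so that $g$ is future-measurable past the split $s=i$, the reduction gives $\E[fg]=\E[\tilde f\,g]$ with $\tilde f:=\E[f\given\boldsymbol{X}_{p_T}(i)]\in L_0^2(\mathcal{F}_i^i)$; then, since $\tilde f$ is now $\mathcal{F}_i^i\subseteq\mathcal{F}_{-\infty}^{j}$-measurable, applying the reduction at the split $s=j$ gives $\E[\tilde f\,g]=\E[\tilde f\,\hat g]$ with $\hat g:=\E[g\given\boldsymbol{X}_{p_T}(j)]\in L_0^2(\mathcal{F}_j^j)$. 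Because conditional expectation is an $L^2$ contraction, $\|\tilde f\|_2\le1$ and $\|\hat g\|_2\le1$, so $|\E[fg]|=|\E[\tilde f\hat g]|\le\rho(\mathcal{F}_i^i,\mathcal{F}_j^j)$; taking the supremum over $f,g$ yields $\rho(\mathcal{F}_{-\infty}^{i},\mathcal{F}_{j}^{\infty})\le\rho(\mathcal{F}_{i}^{i},\mathcal{F}_{j}^{j})$, and symmetry of $\rho$ gives the stated equality written with $\mathcal{F}_j^j,\mathcal{F}_i^i$.

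For the submultiplicativity, fix $i\le j\le k$ and centered unit-norm $f\in L_0^2(\mathcal{F}_i^i)$, $h\in L_0^2(\mathcal{F}_k^k)$. The key structural input is that, for a Markov chain, $\boldsymbol{X}_{p_T}(i)$ and $\boldsymbol{X}_{p_T}(k)$ are conditionally independent given $\boldsymbol{X}_{p_T}(j)$ (a consequence of the past/future splitting at $s=j$). Hence $\E[fh\given\boldsymbol{X}_{p_T}(j)]=\E[f\given\boldsymbol{X}_{p_T}(j)]\,\E[h\given\boldsymbol{X}_{p_T}(j)]$, and taking expectations, $\E[fh]=\langle u,w\rangle$ with $u:=\E[f\given\boldsymbol{X}_{p_T}(j)]$ and $w:=\E[h\given\boldsymbol{X}_{p_T}(j)]$ both in $L_0^2(\mathcal{F}_j^j)$. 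The operator-norm reformulation bounds the two factors separately, $\|u\|_2\le\rho(\mathcal{F}_i^i,\mathcal{F}_j^j)$ and $\|w\|_2\le\rho(\mathcal{F}_j^j,\mathcal{F}_k^k)$, so Cauchy--Schwarz gives $|\E[fh]|\le\|u\|_2\|w\|_2\le\rho(\mathcal{F}_i^i,\mathcal{F}_j^j)\,\rho(\mathcal{F}_j^j,\mathcal{F}_k^k)$; taking the supremum over $f,h$ finishes the claim.

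I expect the main obstacle to be the careful bookkeeping in the reduction steps rather than any deep estimate: one must invoke the Markov property at the correct split time and verify at each step that the factor left untouched genuinely lies on the correct (past or future) side of that split. In the first identity this is exactly why the order of the two reductions matters and why the hypothesis $i\le j$ is used twice; in the submultiplicativity it is what legitimizes the factorization through the middle coordinate $\boldsymbol{X}_{p_T}(j)$. The only non-elementary ingredient, the operator-norm reformulation of $\rho$, is itself a short Cauchy--Schwarz computation that I would include for completeness.
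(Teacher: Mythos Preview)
Your proof is correct. The paper, however, supplies no argument at all: its entire proof is a citation to Theorems~7.2 and~7.4 of Bradley's monograph on mixing, which are precisely the two statements at hand (reduction of the $\rho$-coefficient for a Markov chain from the infinite past/future to the single-coordinate $\sigma$-fields, and submultiplicativity through an intermediate coordinate). What you have written is essentially the standard proof of those theorems: the operator-norm reformulation of $\rho$, combined with conditional independence of past and future given the present, gives both claims via the $L^2$-contraction of conditional expectation and Cauchy--Schwarz. So the mathematical content coincides with what the cited results contain; you have simply unpacked the citation into a self-contained argument. The gain is that your version stands on its own, whereas the paper's is a one-line appeal to an external reference.
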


\begin{proof}
Above results can be easily proved by the Theorem 7.2 and Theorem 7.4 from \cite{BradleyMixing}
\end{proof}

\begin{lemma}\label{A:InvCovDiff_def}
$n\times n$ matrix $A$ and $B$, suppose $A$ and  $\left(\begin{array}{cc}
A & B\\
B^\intercal & A
\end{array}\right)$ are positive definite,  then $$2\left(\begin{array}{cc}
A & B\\
B^\intercal & A
\end{array}\right)^{-1}-\left(\begin{array}{cc}
A & \boldsymbol{0}\\
\boldsymbol{0} & A
\end{array}\right)^{-1}$$ 
is always positive definite.
\end{lemma}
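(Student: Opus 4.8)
The plan is to reduce the statement to the positive definiteness of an explicit $2n\times 2n$ block matrix that can be read off directly from the hypotheses. Write $M:=\begin{pmatrix}A & B\\ B^{\intercal} & A\end{pmatrix}$ and $D:=\begin{pmatrix}A & \boldsymbol{0}\\ \boldsymbol{0} & A\end{pmatrix}$. Since $A\succ0$, the block-diagonal matrix $D$ is positive definite and admits a symmetric positive definite square root $D^{1/2}$; by hypothesis $M\succ0$, so $M^{-1}$ and $D^{-1}$ exist. We must show $2M^{-1}-D^{-1}\succ0$, and the first step is to show that this is equivalent to $2D-M\succ0$.

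For the equivalence, conjugate by the invertible symmetric matrix $D^{1/2}$, which preserves the Loewner order: $2M^{-1}-D^{-1}\succ0$ iff $2D^{1/2}M^{-1}D^{1/2}-I_{2n}\succ0$. Put $N:=D^{-1/2}MD^{-1/2}$, which is symmetric positive definite (a congruence of $M$), so that $D^{1/2}M^{-1}D^{1/2}=N^{-1}$ and the condition becomes $N^{-1}\succ\tfrac12 I_{2n}$. Since $N\succ0$ and $\tfrac12 I_{2n}\succ0$, inverting reverses the order, so this is the same as $N\prec 2I_{2n}$, i.e. $2I_{2n}-N\succ0$; conjugating back by $D^{1/2}$ turns this into $D^{1/2}(2I_{2n}-N)D^{1/2}=2D-M\succ0$, as claimed.

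It then remains to verify $2D-M\succ0$, which is where the block structure does the work: a direct computation gives $2D-M=\begin{pmatrix}A & -B\\ -B^{\intercal} & A\end{pmatrix}$, and with the orthogonal matrix $P:=\begin{pmatrix}I_n & \boldsymbol{0}\\ \boldsymbol{0} & -I_n\end{pmatrix}$ (so that $P=P^{\intercal}=P^{-1}$) one has $2D-M=PMP^{\intercal}$. Hence $2D-M$ is congruent to $M\succ0$ and is therefore positive definite; unwinding the chain of equivalences yields $2M^{-1}-D^{-1}\succ0$. I expect the only delicate point to be the order bookkeeping — congruence by an invertible matrix preserves $\succ0$, whereas inversion of positive definite matrices reverses the Loewner order — rather than any computation; note in particular that the naive bound coming from operator convexity of $X\mapsto X^{-1}$ points in the wrong direction and does not suffice, so the congruence argument (equivalently, an eigenvalue computation for $D^{-1/2}MD^{-1/2}$) seems to be the right route.
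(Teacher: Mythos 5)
Your proof is correct, and it takes a genuinely different and substantially cleaner route than the paper. The paper factors out $\diag(A^{1/2},A^{1/2})$, reduces to showing $Q=2\bigl(\begin{smallmatrix}I & D\\ D^{\intercal} & I\end{smallmatrix}\bigr)^{-1}-I\succ0$, writes out the block inverse explicitly, and then analyzes the Schur complement of $Q$ through a chain of identities ($P^{-1}=I+D^{\intercal}\Omega^{-1}D$, etc.) culminating in the eigenvalue computation $\frac{2}{1-\lambda}-1-4(\frac{\lambda}{1-\lambda}-\frac{\lambda^{2}}{1+\lambda^{2}\cdots})=\frac{1-\lambda}{1+\lambda}$ for eigenvalues $\lambda$ of $DD^{\intercal}$. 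You instead observe that, by congruence with $D^{1/2}=\diag(A,A)^{1/2}$ and antitonicity of the inverse on positive definite matrices, $2M^{-1}-D^{-1}\succ0$ is \emph{equivalent} to $2D-M\succ0$, and that $2D-M=\bigl(\begin{smallmatrix}A & -B\\ -B^{\intercal} & A\end{smallmatrix}\bigr)=PMP^{\intercal}$ with $P=\diag(I_n,-I_n)$ is congruent to $M$ and hence positive definite. Every step checks out: the order bookkeeping ($N^{-1}\succ\tfrac12 I\iff N\prec 2I$ for symmetric $N\succ0$) is exactly right, and the final congruence is immediate. What your approach buys is the avoidance of all the explicit inverse formulas and the functional-calculus manipulations of $DD^{\intercal}$ in the paper's argument (which, as written, glosses over points such as why eigenvalues of $DD^{\intercal}$ lie in $[0,1)$ rather than $(0,1)$, and why the scalar identity transfers to the matrix $W$); your proof only uses the hypothesis $M\succ0$ directly and in fact makes clear that the assumption $A\succ0$ is redundant, being implied by $M\succ0$.
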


\begin{proof}
Since $A$ is p.d, then there is full rank matrix $A^{\frac{1}{2}}$ such that $A=A^{\frac{1}{2}}A^{\frac{1}{2}}$. We can decompose $\left(\begin{array}{cc}
A & B\\
B^\intercal & A
\end{array}\right) $ and $\left(\begin{array}{cc}
A & \boldsymbol{0}\\
\boldsymbol{0} & A
\end{array}\right) $ as following,
\begin{equation}
\left(\begin{array}{cc}
A & B\\
B^\intercal & A
\end{array}\right)  =\begin{pmatrix}A^{\frac{1}{2}} & \boldsymbol{0}\\
\boldsymbol{0} & A^{\frac{1}{2}}
\end{pmatrix}
\begin{pmatrix}I & (A^{\frac{1}{2}}) ^{-1}B(A^{\frac{1}{2}}) ^{-1}\\
(A^{\frac{1}{2}}) ^{-1}B^\intercal   (A^{\frac{1}{2}}) ^{-1} & I
\end{pmatrix}
\begin{pmatrix}A^{\frac{1}{2}} & \boldsymbol{0}\\
\boldsymbol{0} & A^{\frac{1}{2}}
\end{pmatrix}, 
\end{equation}

\begin{equation}
\left(\begin{array}{cc}
A & \boldsymbol{0}\\
\boldsymbol{0} & A
\end{array}\right) =\begin{pmatrix}A^{\frac{1}{2}} & \boldsymbol{0}\\
\boldsymbol{0} & A^{\frac{1}{2}}
\end{pmatrix}\begin{pmatrix}A^{\frac{1}{2}} & \boldsymbol{0}\\
\boldsymbol{0} & A^{\frac{1}{2}}
\end{pmatrix}.
\end{equation}

Take the inverse of above two matrix
\[
\left(\begin{array}{cc}
A & B\\
B^\intercal & A
\end{array}\right) ^{-1}=\begin{pmatrix}A^{\frac{1}{2}} & \boldsymbol{0}\\
\boldsymbol{0} & A^{\frac{1}{2}}
\end{pmatrix}^{-1}\begin{pmatrix}I & (A^{\frac{1}{2}}) ^{-1}B(A^{\frac{1}{2}}) ^{-1}\\
(A^{\frac{1}{2}}) ^{-1}B^\intercal   (A^{\frac{1}{2}}) ^{-1} & I
\end{pmatrix}^{-1}\begin{pmatrix}A^{\frac{1}{2}} & \boldsymbol{0}\\
\boldsymbol{0} & A^{\frac{1}{2}}
\end{pmatrix}^{-1},\label{eq:inv1}
\]
 
\[
\left(\begin{array}{cc}
A & \boldsymbol{0}\\
\boldsymbol{0} & A
\end{array}\right) ^{-1}=\begin{pmatrix}A^{\frac{1}{2}} & \boldsymbol{0}\\
\boldsymbol{0} & A^{\frac{1}{2}}
\end{pmatrix}^{-1}\begin{pmatrix}A^{\frac{1}{2}} & \boldsymbol{0}\\
\boldsymbol{0} & A^{\frac{1}{2}}
\end{pmatrix}^{-1}. \label{eq.inv2}
\]
  
Then we can rewrite the matrix in (\ref{A:InvCovDiff_def}) as
\begin{equation}
\begin{aligned}
2\left(\begin{array}{cc}
A & B\\
B^\intercal    & A
\end{array}\right) ^{-1}-\left(\begin{array}{cc}
A & \boldsymbol{0}\\
\boldsymbol{0} & A
\end{array}\right) ^{-1}=& \begin{pmatrix}A^{\frac{1}{2}} & \boldsymbol{0}\\
\boldsymbol{0} & A^{\frac{1}{2}}
\end{pmatrix}^{-1} \\
& \cdot \left\{2\begin{pmatrix}I & (A^{\frac{1}{2}}) ^{-1}B(A^{\frac{1}{2}}) ^{-1}\\
(A^{\frac{1}{2}}) ^{-1}B^\intercal   (A^{\frac{1}{2}}) ^{-1} & I
\end{pmatrix}^{-1}-I\right\} \\
& \cdot \begin{pmatrix}A^{\frac{1}{2}} & \boldsymbol{0}\\
\boldsymbol{0} & A^{\frac{1}{2}}
\end{pmatrix}^{-1}\label{eq:simga1-tilteSigma1-inverse-1}
\end{aligned}
\end{equation}

If $$ Q:=\left\{ 2\begin{pmatrix}I & (A^{\frac{1}{2}}) ^{-1}B(A^{\frac{1}{2}}) ^{-1}\\
(A^{\frac{1}{2}}) ^{-1}B^\intercal   (A^{\frac{1}{2}}) ^{-1} & I
\end{pmatrix}^{-1}-I\right\} $$ is positive definite, then $2\left(\begin{array}{cc}
A & B\\
B^\intercal    & A
\end{array}\right) ^{-1}-\left(\begin{array}{cc}
A & \boldsymbol{0}\\
\boldsymbol{0} & A
\end{array}\right) ^{-1}$ will be positive definite. 
\\Then the following is to show that matrix $Q$ is
positive definite. 
Denote $D:=(A^{\frac{1}{2}}) ^{-1}B^\intercal   (A^{\frac{1}{2}}) ^{-1}$.
Hence, 
\begin{align}\label{eq:inside Q}
\begin{pmatrix}I & (A^{\frac{1}{2}}) ^{-1}B(A^{\frac{1}{2}}) ^{-1}\\
(A^{\frac{1}{2}}) ^{-1}B^\intercal   (A^{\frac{1}{2}}) ^{-1} & I
\end{pmatrix}^{-1} & =\begin{pmatrix}I & D\\
D^\intercal    & I
\end{pmatrix}^{-1}\nonumber \\
\end{align}
Easy to tell $I-DD^\intercal$ and $I-D^{\intercal}D$  are positive definite. The inverse matrix of $\begin{pmatrix}I & D\\
D^\intercal    & I
\end{pmatrix}$  is $\begin{pmatrix}(I-DD^\intercal   ) ^{-1} & -D(I-D^\intercal   D) ^{-1}\\
-D^\intercal   (I-DD^\intercal   ) ^{-1} & (I-D^\intercal   D) ^{-1}
\end{pmatrix}\nonumber$. For convenience, $P:=I-D^\intercal   D\succ0$, $\Omega:=I-DD^\intercal   \succ0$. Observing that $P^{-1}=I+D^\intercal   \Omega^{-1}D$,
$\Omega^{-1}=I+DP^{-1}D^\intercal   $, $D^\intercal   \Omega^{-1}=P^{-1}D^\intercal   $.
Then we have 
\begin{equation}
\left(\begin{array}{cc}
I & D\\
D^{\intercal} & I
\end{array}\right)^{-1}=\left(\begin{array}{cc}
\Omega^{-1}& -DP^{-1}\\
-P^{-1}D^\intercal    & P^{-1}
\end{array}\right)\label{eq:IDID_inverse}
\end{equation}
Plug (\ref{eq:inside Q})(\ref{eq:IDID_inverse}) into $Q$, 
\begin{equation} \label{eq:Q}
\begin{aligned}
Q & =\begin{pmatrix}2\Omega^{-1}-I & -2DP^{-1}\\
-2P^{-1}D^\intercal    & 2P^{-1}-I
\end{pmatrix}.
\end{aligned}
\end{equation}
If we can prove the  Schur complement matrix of $Q$ is positive matrix, then so as $Q$. The Schur complement matrix of $Q$  (\ref{eq:Q}) is $W := \left(2\Omega^{-1}-I\right)-4DP^{-1}\left(2P^{-1}-I\right)^{-1}P^{-1}D^{\intercal}$. Directly computations yields $\left(2P^{-1}-I\right)^{-1}=P-PD^{\intercal}A^{-1}DP$,
where $A=I-DD^{\intercal}DD^{\intercal}$. Hence,
\begin{align*}
\left(2\Omega^{-1}-I\right)-4DP^{-1}\left(2P^{-1}-I\right)^{-1}P^{-1}D^{\intercal} & =\left(2\Omega^{-1}-I\right)-4DP^{-1}\left(P-PD^{\intercal}A^{-1}DP\right)P^{-1}D^{\intercal}\\
 & =\left(2\Omega^{-1}-I\right)-4\left(DP^{-1}D^{\intercal}-DD^{\intercal}A^{-1}DD^{\intercal}\right).
\end{align*}
Agian, by $D^{\intercal}\Omega^{-1}=P^{-1}D^{\intercal}$, we have
\begin{align}
\left(2\Omega^{-1}-I\right)-4\left(DP^{-1}D^{\intercal}-DD^{\intercal}A^{-1}DD^{\intercal}\right) & =\left(2\Omega^{-1}-I\right)-4\left(DD^{\intercal}\Omega^{-1}-DD^{\intercal}A^{-1}DD^\intercal\right).\label{eq:W}
\end{align}
Observing that $\Omega=I-DD^\intercal$ is positive definite, one concludes
that all eigenvalue of $DD^\intercal$ are in $(0,1)$. It follows that
if $\lambda$ is an eigenvalue of $DD^\intercal$, then

\[
\frac{2}{1-\lambda}-1-4\left(\frac{\lambda}{1-\lambda}-\frac{\lambda^{2}}{1-\lambda^{2}}\right)=\frac{1-\lambda}{1+\lambda}
\]
is an eigenvalue of $W$. As $\lambda\in\left(0,1\right)$,
$W$ is positive definite. Therefore $Q$ is positive definite, and
then we will have the Lemma. 
\end{proof}

\begin{lemma} \label{A:AB_pd}
Both $A,B\in\mathbb{R}^{n\times n}$, and if A is positive definite
matrix, B is positive (semi) definite matrix, then all eigenvalues
of AB or BA are positive (nonnegative).
\end{lemma}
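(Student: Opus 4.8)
The plan is to reduce the statement to the spectral theorem by exhibiting $AB$ as similar to a symmetric matrix. Since $A$ is positive definite, it admits a (unique) positive definite square root $A^{1/2}$, which is invertible with $\left(A^{1/2}\right)^{\intercal}=A^{1/2}$. First I would write
\[
AB = A^{1/2}\left(A^{1/2}BA^{1/2}\right)A^{-1/2},
\]
so that $AB$ is similar to $M:=A^{1/2}BA^{1/2}$. Because $B=B^{\intercal}$, the matrix $M$ is symmetric, hence all its eigenvalues are real, and by similarity they coincide with the eigenvalues of $AB$. This is the step that secures that the eigenvalues of $AB$ are real in the first place.

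Next I would pin down the sign. For any $x\in\mathbb{R}^{n}$,
\[
x^{\intercal}Mx=\left(A^{1/2}x\right)^{\intercal}B\left(A^{1/2}x\right).
\]
If $B\succeq0$ this is $\geq0$, so $M\succeq0$ and all eigenvalues of $M$—hence of $AB$—are nonnegative. If moreover $B\succ0$, then since $A^{1/2}$ is invertible we have $A^{1/2}x\neq0$ whenever $x\neq0$, so $x^{\intercal}Mx>0$ for all $x\neq0$; thus $M\succ0$ and the eigenvalues of $AB$ are strictly positive. This handles the claim for $AB$.

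Finally, for $BA$: since $A$ and $B$ are symmetric, $(AB)^{\intercal}=B^{\intercal}A^{\intercal}=BA$, and a matrix and its transpose have the same characteristic polynomial, so $BA$ has exactly the same eigenvalues as $AB$; alternatively one notes directly that $BA=A^{-1/2}MA^{1/2}$ is itself similar to $M$. Either remark closes the $BA$ case. I do not expect a genuine obstacle in this argument—the only point deserving a word of care is precisely the passage through the symmetric matrix $M$, which is what guarantees the eigenvalues are real before one even discusses their sign.
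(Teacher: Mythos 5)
Your argument is correct and follows essentially the same route as the paper's proof: conjugating by $A^{1/2}$ to reduce to the symmetric matrix $A^{1/2}BA^{1/2}$ and then checking the sign of its quadratic form. The only cosmetic difference is that you exhibit the similarity for $AB$ while the paper writes it for $BA$, but the key matrix and the definiteness argument are identical.
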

\begin{proof}
$AB$ has the same eigenvalues as $BA$. $BA=A^{-\frac{1}{2}}(A^{\frac{1}{2}}BA^{\frac{1}{2}}) A^{\frac{1}{2}}$.
Then $BA$ has the same eigenvalues as $A^{\frac{1}{2}}BA^{\frac{1}{2}}$.
$\forall x\in\mathbb{R}^{n}$, $x^\intercal    A^{\frac{1}{2}}BA^{\frac{1}{2}}x=(A^{\frac{1}{2}}x) ^\intercal    B(A^{\frac{1}{2}}x) \succ0 (\succeq 0)$.
Then we will know $A^{\frac{1}{2}}BA^{\frac{1}{2}}$ is positive (semi) definite and
all its eigenvalues are positive (non-negative). Hence eigenvalues of $AB$
or $BA$ are positive (non-negative) too.
\end{proof}

\begin{remark} \label{A:singularValue}
$A,B\in\mathbb{R}^{n\times n}$, if $i+j-1\le n$, and the $i,j\in\mathbb{N}$,
then the singular value satisfies $\sigma_{i+j-1}(A+B) \leq \sigma_{i}(A) +\sigma_{j}(B) $,
where index $i$ is the $i^{th}$ largest singular value. If we set
$A=I_{p_{T}}-L_{p_{T}}$ , $B=L_{p_{T}}$, with $\Vert L_{p_{T}}\Vert<\mu<1$, then 
\[
\sigma_{p_{T}+1-1}(I_{p_{T}}-L_{p_{T}}+L_{p_{T}}) \leq \sigma_{p_{T}}(I_{p_{T}}-L_{p_{T}}) +\sigma_{1}(L_{p_{T}}) ,
\]
\[
1-\Vert L_{p_{T}}\Vert=1-\sigma_{1}(L_{p_{T}}) \leq \sigma_{p_{T}}(I_{p_{T}}-L_{p_{T}}) =\sigma_{\min}(I_{p_{T}}-L_{p_{T}}). 
\]
\end{remark}

\end{document}